\numberwithin{equation}{section}
\numberwithin{equation}{section}
\newcommand\e\varepsilon
\newtheorem{theorem}{Theorem}[section]
\newtheorem{proposition}{Proposition}[section]
\newtheorem{definition}{Definition}[section]
\newtheorem{remark}{Remark}[section]
\begin{document}

\title[The kernel space of linear operator on a class of Grushin equation]
{The kernel space of linear operator on a class of Grushin equation}

\author[Y. Wei and X. Zhou]{Yawei Wei and Xiaodong Zhou}

\address[Yawei Wei]{School of Mathematical Sciences and LPMC, Nankai University, Tianjin 300071, China}
\email{weiyawei@nankai.edu.cn}

\address[Xiaodong Zhou]{School of Mathematical Sciences, Nankai University, Tianjin 300071, China}
\email{1120210030@mail.nankai.edu.cn}

\thanks{Acknowledgements: This work is supported by the NSFC under the grands 12271269 and the Fundamental Research Funds for the Central Universities.}

\subjclass[2020]{35B40, 47B38; 35J70}

\keywords {Kernel space, Linear operator, Grushin equation}


\begin{abstract}%
	In this paper, we concern the kernel of linear operator for a class of Grushin equation. First, we study the kernel space of linear operator for a general Grushin equation. Then, we provide an exact expression for the kernel space of linear operator for a special Grushin equation. Finally, we prove the linear operator related to the singularly perturbed Grushin equation is invertible when restricted to the complement of its approximate kernel space.
\end{abstract}

\maketitle

\section{Introduction}

\setcounter{equation}{0}

In this paper, we consider the following problem
\begin{equation}\label{t1}
  -\Delta_\gamma u=f(u),\;\;u>0,\;\;~\mbox{in}~\mathbb{R}^{N+l},
\end{equation}
where $\Delta_\gamma u$ is well-known \textit{Grushin operator} given by
\begin{equation}\label{i2}
  \Delta_\gamma u(z)=\Delta_xu(z)+|x|^{2\gamma}\Delta_yu(z).
\end{equation}
$\Delta_x$ and $\Delta_y$ are the Laplace operators in the variable $x$ and $y$ respectively, with $z=(x,y)\in\mathbb{R}^N\times\mathbb{R}^l=\mathbb{R}^{N+l}$ and $N>1$, $l>1$, $N+l\geq3$. Here, $\gamma\geq0$ is a real number and
\begin{equation}\label{yw1}
  N_\gamma=N+(1+\gamma)l
\end{equation}
is the appropriate homogeneous dimension. The precise conditions of the nonlinearity $f\in C^1(\mathbb{R})$ will be given in the following.

It's worth noting that $\Delta_\gamma$ is elliptic for $x\neq0$ and degenerates on $\{0\}\times\mathbb{R}^l$. When $\gamma>0$ is an integer, the vector fields
\begin{equation}\label{zzxxyy}
  X_1=\frac{\partial}{\partial x_1}, \cdots, X_N=\frac{\partial}{\partial x_N}, Y_1=|x|^\gamma\frac{\partial}{\partial y_1}, \cdots, Y_l=|x|^\gamma\frac{\partial}{\partial y_l}
\end{equation}
satisfy the H\"ormander condition and then $\Delta_\gamma$ is hypoelliptic. More about H\"ormander condition and hypoellipticity could be found in \cite{mander}. Geometrically, $\Delta_\gamma$ comes from a sub-Laplace operator on a nilpotent Lie group of step $\gamma+1$ by a submersion. Specific explanation of geometric framework can be consulted in \cite{Bauer1} by Bauer et al. In addition, we referred to \cite{Tri4,Tri5}.

In the past few decades, many scholars have concerned with the kernel of linear operator for the following Laplace equation
\begin{equation}\label{zzz11}
\begin{cases}
-\Delta u=f(u),\;\;u>0,&~ \mbox{in} ~\mathbb{R}^{N},\\[2mm]
u\in H^1(\mathbb{R}^N),
\end{cases}
\end{equation}
where $f:\mathbb{R}\rightarrow\mathbb{R}$ is a general nonlinearity and $H^1(\mathbb{R}^N)$ is a standard Sobolev space. In this paper, we consider the kernel of linear operator for the equation \eqref{t1}, which is similar to the case of \eqref{zzz11}.

Special case of \eqref{zzz11} is the following problem
\begin{equation}\label{zzz1}
\begin{cases}
-\Delta u+ u=u^p,\;\;u>0,&~ \mbox{in} ~\mathbb{R}^{N},\\[2mm]
u\in H^1(\mathbb{R}^N),
\end{cases}
\end{equation}
where $1<p<2^*-1=\frac{N+2}{N-2}$ (see \cite{Ni}, for example). Let $U$ be a radially symmetric solution of \eqref{zzz1}, then the kernel of $-\Delta u+u-pU^{p-1}u$, $u\in H^1(\mathbb{R}^N)$, is given by
\begin{equation}\label{zzz2}
  span\left\{\frac{\partial U}{\partial x_1},\cdots,\frac{\partial U}{\partial x_N}\right\}.
\end{equation}
For the proof of the above statement, the readers can refer to \cite{xtm}. In this paper, we give an exact expression for the kernel space of linear operator for the following problem
\begin{equation}\label{hx1}
\begin{cases}
 -\Delta_\gamma u+u=u^p,\;\;u>0,\;\;~\mbox{in}~\mathbb{R}^{N+l},\\[2mm]
u\in H^{1,2}_\gamma(\mathbb{R}^{N+l}),
\end{cases}
\end{equation}
where $p>1$ and $H^{1,2}_\gamma(\mathbb{R}^{N+l})$ is a weighted Sobolev space will be defined later.

Owing to the result of \eqref{zzz2}, many scholars are able to construct solutions of equations by using finite-dimensional reduction methods. Authors in \cite{bib7} construct the multi-peak solutions for the Chern-Simons-Schr\"odinger system. The crucial step to apply implicit function theorem is to prove linear operator $L_\varepsilon$ restricted to $E_{\varepsilon,Y}$ is invertible, where $L_\varepsilon$ is the linear operator of Chern-Simons-Schr\"odinger system
\begin{equation}\label{i8}
\begin{cases}
-\varepsilon^2\Delta u+V(x)u+(A_0+A_1^2+A_2^2)u=|u|^{p-2}u,~\;\;x\in\mathbb{R}^2,\\[2mm]
\partial_1A_0=A_2u^2,~\;\;\partial_2A_0=-A_1u^2 ,\\[2mm]
\partial_1A_2-\partial_2A_1=-\frac{1}{2}|u|^2,~\;\;\partial_1A_1+\partial_2A_2=0,
\end{cases}
\end{equation}
and
\begin{equation}\label{i9}
  E_{\varepsilon,Y}:=\{\varphi\in H_\varepsilon:\langle\frac{\partial U^i_{\varepsilon,y_i}}{\partial y_l^i},\varphi\rangle_\varepsilon=0, i=1,\cdots,k, l=1,2\}
\end{equation}
is the complementary space of the approximate kernel of $L_\varepsilon$. In \eqref{i9},
\begin{equation*}
  H_\varepsilon=\left\{u\in H^1(\mathbb{R}^2): \|u\|_\varepsilon^2=\int_{\mathbb{R}^2}\varepsilon^2|\nabla u|^2+V(x)|u|^2dx<\infty\right\}.
\end{equation*}
Thus it is vital to find the kernel or the approximate kernel of linear operator for the corresponding equation. The approximate kernel of linear operator for \eqref{i8} is $K_{\varepsilon,Y}=\textmd{span}\{\frac{\partial U^i_{\varepsilon,y_i}}{\partial y_l^i}, i=1,\cdots,k, l=1,2\}$ exactly. This is the result of Laplace equation, which has been studied wildly such as in \cite{bib6} and \cite{xtm}. In this paper, we consider the following singularly perturbed nonlinear equation as follows
\begin{equation}\label{p10}
\begin{cases}
-\varepsilon^2\Delta_\gamma u+ a(z)u=u^{q-1},\;\;u>0,~ \mbox{in} ~\mathbb{R}^{N+l},\\[2mm]
u\in H^{1,2}_\gamma(\mathbb{R}^{N+l}),
\end{cases}
\end{equation}
where $\varepsilon>0$ is a small parameter, $2<q<2_\gamma^*=\frac{2N_\gamma}{N_\gamma-2}$, and $a(z)\in C^2(\mathbb{R}^{N+l})$ satisfies $0<a_0\leq a(z)\leq a_1$ in $\mathbb{R}^{N+l}$. We obtain that the linear operator related to \eqref{p10} is invertible when restricted to the complement of its approximate kernel space.

Moreover in \cite{bib10}, Liu and his collaborators concern with a critical Grushin-type problem. By applying Lyapunov-Schmidt reduction argument and by attaching appropriate assumptions, they prove that this problem has infinitely many positive multi-bubbling solutions with arbitrarily large energy and cylindrical symmetry. One can refer to \cite{bib9,bib3,bib12}, \cite{bib13} and the references therein. The above works illustrate that the study of the kernel of linear operator is crucial and can promote the development of other issues related to Grushin type equations.

Now we introduce some notations that will be used later on.

\begin{definition}\label{yw2}
When $\gamma\geq0$, we define a weighted Sobolev space
\begin{equation}\label{p3}
  H^{1,2}_\gamma(\mathbb{R}^{N+l})=\left\{u\in L^2(\mathbb{R}^{N+l})\;|\;\frac{\partial u}{\partial x_i}, |x|^\gamma\frac{\partial u}{\partial y_j}\in L^2(\mathbb{R}^{N+l}), i=1,\cdots,N, j=1,\cdots,l\right\}
\end{equation}
as a weighted Sobolev space.
\end{definition}

If $u\in H^{1,2}_\gamma(\mathbb{R}^{N+l})$, we denote the gradient operator $\nabla_\gamma$ as follows:
\begin{equation}\label{p4}
  \nabla_\gamma u=(\nabla_xu, |x|^\gamma\nabla_yu)=(u_{x_1},\cdots,u_{x_N},|x|^\gamma u_{y_1},\cdots,|x|^\gamma u_{y_l}),
\end{equation}
and
\begin{equation}\label{p5}
  |\nabla_\gamma u|^2=|\nabla_xu|^2+|x|^{2\gamma}|\nabla_yu|^2.
\end{equation}
We should note that $H^{1,2}_\gamma(\mathbb{R}^{N+l})$ is a Hilbert space, if we endow with the inner product by
\begin{equation}\label{p6}
  \langle u,v\rangle_\gamma=\int_{\mathbb{R}^{N+l}}\nabla_\gamma u \cdot \nabla_\gamma v+uvdz,
\end{equation}
and the corresponding norm
\begin{equation}\label{p7}
  \|u\|_\gamma=\big(\int_{\mathbb{R}^{N+l}}|\nabla_\gamma u|^2+|u|^2dz\big)^\frac{1}{2}
\end{equation}
is induced by inner product \eqref{p6}.

\begin{remark}\label{yyy1}
Studying problem \eqref{p10}, we define a new norm on $H^{1,2}_\gamma(\mathbb{R}^{N+l})$ as follows,
\begin{equation}\label{02.1}
  \|u\|_\varepsilon:=\left(\int_{\mathbb{R}^{N+l}}\varepsilon^2|\nabla_\gamma u|^2+a(z)u^2dz\right)^{\frac{1}{2}},
\end{equation}
which is induced by the inner product
\begin{equation}\label{02.2}
  \langle u,v\rangle_\varepsilon:=\int_{\mathbb{R}^{N+l}}\varepsilon^2\nabla_\gamma u\cdot\nabla_\gamma v+a(z)uvdz,\;\;\forall u,v\in H^{1,2}_\gamma(\mathbb{R}^{N+l}).
\end{equation}
This norm is not equivalent to \eqref{p7}, we only use this norm when we study \eqref{p10}.
\end{remark}

\begin{definition}\label{zrf1}
Define a new distance in~$\mathbb{R}^{N+l}$:~
\begin{equation}\label{p8}
  d(z,0)=\left(\frac{1}{(1+\gamma)^2}|x|^{2+2\gamma}+|y|^2\right)^{\frac{1}{2+2\gamma}}
\end{equation}
for~$z=(x,y)\in\mathbb{R}^{N+l}$,~and set
\begin{equation}\label{p9}
  \widetilde{B}_r(0):=\{z=(x,y)\in\mathbb{R}^{N+l}|\;d(z,0)<r\}
\end{equation}
be a ball in the sense of this new distance.
\end{definition}

\begin{definition}\label{zrf4}
We say that $u_\varepsilon$ is a single-peak solution of \eqref{p10} concentrated at $z_0=(x_0,y_0)$ if
 $u_\varepsilon$  satisfies  \\
\noindent \textup{(i)} $u_\varepsilon$ has a local maximum point $z_{\varepsilon}=(x_\varepsilon,y_\varepsilon)\in\mathbb{R}^{N+l}$ such that
\begin{equation*}
  z_\varepsilon\to z_0\in \mathbb{R}^{N+l},\;\;~\mbox{as}~\varepsilon\rightarrow0;
\end{equation*}
\noindent \textup{(ii)} For any given $\tau>0$, there exists $R\gg 1$ such that
\begin{equation*}
|u_\varepsilon(z)|\leq \tau~\,\,\mbox{for}\,\,z\in \mathbb{R}^{N+l}\backslash \widetilde{B}_{R\varepsilon}(z_\varepsilon);
\end{equation*}
\noindent \textup{(iii)} There exists $M>0$ such that
\begin{equation*}
u_\varepsilon\leq M.
\end{equation*}
\end{definition}

We state our main results as follows.

Firstly, consider the problem \eqref{t1}. Recall problem \eqref{t1}:
\begin{equation*}
  -\Delta_\gamma u=f(u),\;\;u>0,\;\;~\mbox{in}~\mathbb{R}^{N+l}.
\end{equation*}
We may assume that the solution $u$ of \eqref{t1} is radially symmetric with respect to the variable $y$, and the solution $u$ decays to 0 fast enough when $|y|\rightarrow\infty$. Let $U(x,|y|)=U(x,r)=U$ be a radially symmetric solution of \eqref{t1}. Set
\begin{equation}\label{t2}
  Lu:=-\Delta_\gamma u-f'(U)u=0,\;\;\;\;~\mbox{in}~\mathbb{R}^{N+l}.
\end{equation}
Here $u$ is in $H^{1,2}_\gamma(\mathbb{R}^{N+l})$. Denote $r=|y|$ and let $\omega\in\mathbb{S}^{l-1}$. Then \eqref{t2} transforms into
\begin{equation}\label{t3}
  -\Delta_xu-|x|^{2\gamma}\big(\frac{\partial^2u}{\partial r^2}+\frac{l-1}{r}\frac{\partial u}{\partial r}+\frac{1}{r^2}\Delta_\omega u\big)-f'(U)u=0,\;\;\;\;~\mbox{in}~\mathbb{R}^{N+l}.
\end{equation}

According to \cite{bib6}, let $\mu_0=0$, $\mu_1=\cdots=\mu_l=l-1$, $\mu_j>l-1$, $j\geq l+1$ be all the eigenvalues of the Laplace operator $-\Delta_\omega$ on $\mathbb{S}^{l-1}$. The eigenfunction corresponding to $\mu_0$ is 1, to $\mu_j$ is $\frac{y_j}{|y|}$ for $j=1,\cdots,l$. We denote the eigenfunction corresponding to $\mu_j$ is $\varphi_j$, then we have
\begin{equation}\label{t4}
  -\Delta_\omega\varphi_j=\mu_j\varphi_j,
\end{equation}
i.e.
\begin{equation}\label{t5}
-\Delta_\omega\cdot1=0\cdot1,\;\;-\Delta_\omega\frac{y_j}{|y|}=(l-1)\frac{y_j}{|y|},\;j=1,\cdots,l.
\end{equation}

Let $u(x,r,\omega)$ be a solution of \eqref{t2}, where
\begin{equation}\label{t6}
  u(x,r,\omega)=\sum\limits^\infty_{j=0}v_j(x,r)\varphi_j(\omega).
\end{equation}
Then $v_j\in H^{1,2}_\gamma(\mathbb{R}^{N+1})$ and satisfies

\begin{equation}\label{t7}
  -\Delta_x v_j(x,r)-|x|^{2\gamma}\big(v_j^{rr}(x,r)+\frac{l-1}{r}v_j^r(x,r)-\frac{\mu_j}{r^2}v_j(x,r)\big)-f'(U)v_j(x,r)=0,
\end{equation}
i.e.
\begin{equation}\label{t8}
  -\Delta_\gamma v_j(x,r)-|x|^{2\gamma}\frac{l-1}{r}v_j^r(x,r)+|x|^{2\gamma}\frac{\mu_j}{r^2}v_j(x,r)-f'(U)v_j(x,r)=0,
\end{equation}
where $\Delta_\gamma v_j(x,r)=\Delta_x v_j(x,r)+|x|^{2\gamma}v_j^{rr}(x,r)$, and $v_j^r(x,r)$, $v_j^{rr}(x,r)$ represent the first derivative and the second derivative of $v_j(x,r)$ with respect to $r$. Our main task is to solve $v_j(x,r)$ from \eqref{t8}. The first result can be stated as follows.

\begin{theorem}\label{th4}
Suppose that $U(x,|y|)=U(x,r)=U$ is the radial symmetric solution with regard to variable $y$ of the equation \eqref{t1} in the sense of $H^{1,2}_\gamma(\mathbb{R}^{N+l})$.
Moreover, suppose that \eqref{t6} is the solution of linear operator \eqref{t2}, where $v_j(x,r)$ and $U(x,r)$ satisfy
\begin{equation}\label{p21}
  v_j(x,r)=O(r^{-l+1}),\;\;U^r(x,r)=O(r^{-l+1}),\;\;~\mbox{as}~r\rightarrow+\infty.
\end{equation}
Then
\begin{equation}\label{p22}
v_j(x,r)=\left\{\begin{array}{ll}
c_jU^r(x,r), & j=1,\cdots,l,\\
0, & j>l,
\end{array}\right.
\end{equation}
for some constant $c_j$, and $U^r(x,r)$ represents the derivative of $U$ with respect to $r$.
\end{theorem}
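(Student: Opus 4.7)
The plan is to exploit the observation that $U^r$ itself solves equation \eqref{t8} in the case $\mu=l-1$, and then reduce the PDE for $v_j$ to a weighted divergence-form identity via the substitution $v_j=U^r\,w$. Since $U(x,r)$ is radial in $y$, it satisfies
\[
-\Delta_x U - |x|^{2\gamma}\bigl(U^{rr}+\tfrac{l-1}{r}U^r\bigr) = f(U),
\]
and differentiating this identity in $r$ gives
\[
-\Delta_\gamma U^r - |x|^{2\gamma}\tfrac{l-1}{r}(U^r)^r + |x|^{2\gamma}\tfrac{l-1}{r^2}U^r - f'(U)\,U^r = 0,
\]
which is \eqref{t8} with $\mu_j$ replaced by $l-1$. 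Writing $v_j=U^r w$ on the open set $\{U^r\neq 0\}$ and using this relation to cancel the $w$-only contribution in \eqref{t8}, then multiplying through by $(U^r)^2 r^{l-1}$, the equation takes the weighted divergence form
\[
-\nabla_x\!\cdot\!\bigl(r^{l-1}(U^r)^2\nabla_x w\bigr)\;-\;\partial_r\bigl(|x|^{2\gamma}r^{l-1}(U^r)^2\,w^r\bigr)\;+\;(\mu_j-(l-1))\,r^{l-3}|x|^{2\gamma}(U^r)^2\, w \;=\; 0.
\]

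The next step is to test this against $w$ and integrate over $\mathbb{R}^N\times(0,\infty)$. The decay hypothesis \eqref{p21} is used precisely here: the boundary contribution at $r\to\infty$ reduces to something of size $r^{l-1}|x|^{2\gamma}(U^r)^2 w\, w^r\sim r^{-l}$, which vanishes, while the factor $r^{l-1}$ together with $U^r(x,0)=0$ (a consequence of the smoothness of $U$ in $y$ at the origin) eliminates the contribution at $r=0$. What remains is the energy identity
\[
\int_{\mathbb{R}^N}\!\!\int_0^\infty r^{l-1}(U^r)^2\bigl[|\nabla_x w|^2+|x|^{2\gamma}(w^r)^2\bigr]dr\,dx \;+\; (\mu_j-(l-1))\!\int_{\mathbb{R}^N}\!\!\int_0^\infty r^{l-3}|x|^{2\gamma}(U^r)^2 w^2\,dr\,dx \;=\; 0,
\]
in which every integrand is nonnegative.

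For $j>l$ one has $\mu_j-(l-1)>0$, so the second integral forces $w\equiv 0$ on $\{U^r\neq 0\}$ and hence $v_j\equiv 0$ there; a continuity / unique-continuation argument then extends this to all of $\mathbb{R}^{N+1}$. For $j\in\{1,\ldots,l\}$ the second integral drops out, and the first yields $\nabla_x w=0$ and $w^r=0$ on $\{U^r\neq 0\}$, so $w$ is locally constant; the same argument produces a single constant $c_j$ with $v_j=c_j U^r$.

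The main technical obstacle is rigorously handling the substitution across the zero set of $U^r$, where the formal quotient $w=v_j/U^r$ is not defined. I plan to treat this by a standard regularization: replace $w$ by a smoothed test function of the form $v_j U^r/((U^r)^2+\delta)$, perform the integrations by parts for $\delta>0$ where all coefficients are smooth, and then pass to the limit $\delta\to 0^+$, using \eqref{p21} together with dominated convergence to absorb the tails at infinity. Modulo this regularization step, the proof rests only on the identity obtained for $U^r$, the algebra of the substitution $v_j=U^r w$, and the decay recorded in \eqref{p21}.
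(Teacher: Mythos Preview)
Your proposal is correct but follows a genuinely different route from the paper. Both arguments begin from the same observation---that $U^r$ solves \eqref{t8} with $\mu$ replaced by $l-1$---but then diverge. The paper proceeds by a Wronskian-type computation: it multiplies the equation for $v_j$ by $U^r$, the equation for $U^r$ by $v_j$, subtracts, and integrates the resulting identity over $\mathbb{R}^N\times[0,r_0]$ (or $[0,r_n]$). It then argues by contradiction, splitting into the two cases ``$v_j$ has a first zero $r_0$'' versus ``$v_j>0$ for all $r$'', and uses the explicit signs $U^r<0$, $v_j^r(x,r_0)<0$, $\mu_j>l-1$ to show the integrated identity cannot equal zero. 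Your reduction-of-order substitution $v_j=U^r w$ packages the same information as a single positive-definite energy identity, which is cleaner: it eliminates the case split and the pointwise sign chasing. It also supplies a genuine argument for the case $j=1,\dots,l$ (forcing $\nabla_x w=0$, $w^r=0$, hence $w$ constant), whereas the paper simply asserts $v_j=c_jU^r$ ``by comparing'' the two equations without justifying uniqueness. On the other hand, the paper's cross-multiplication never divides by $U^r$, so it sidesteps the regularization step you flag as your main technical obstacle; your approach trades the case analysis for that regularization.

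One small point to tighten: your boundary estimate $r^{l-1}|x|^{2\gamma}(U^r)^2 w\,w^r\sim r^{-l}$ implicitly uses decay of $v_j^r$ and $U^{rr}$, which is not part of hypothesis \eqref{p21} as stated. The paper handles the analogous issue by selecting a sequence $r_n\to\infty$ along which the $H^{1,2}_\gamma$ density vanishes (see \eqref{t25}); you can use the same device rather than relying on pointwise derivative decay.
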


\begin{remark}
Even in the classical nonlinear equation driven by $\Delta$, the term $v_0$ is difficult to calculate. Hence in Theorem \ref{th4}, we can not obtain the precise form of the term $v_0$. But in the nonlinear problem \eqref{hx1}, we obtain the kernel of corresponding linear operator, which is similar to the case of Laplacian.
\end{remark}
Secondly, recall problem \eqref{hx1}:
\begin{equation*}
\begin{cases}
 -\Delta_\gamma u+u=u^p,\;\;u>0,\;\;~\mbox{in}~\mathbb{R}^{N+l},\\[2mm]
u\in H^{1,2}_\gamma(\mathbb{R}^{N+l}).
\end{cases}
\end{equation*}
We consider the corresponding kernel of linear operator for problem \eqref{hx1}
\begin{equation}\label{hx2}
   Lu:=-\Delta_\gamma u+u-pU^{p-1}u,\;\;\;\;~\mbox{in}~\mathbb{R}^{N+l},
\end{equation}
where $u\in H^{1,2}_\gamma(\mathbb{R}^{N+l})$. To find the term $v_0$, we need to study the following equation
\begin{equation}\label{hx3}
  -\Delta_\gamma v_0(x,r)-|x|^{2\gamma}\frac{l-1}{r}v_0^r(x,r)+v_0(x,r)-pU^{p-1}v_0(x,r)=0,\;\;~\mbox{in}~\mathbb{R}^{N+1},
\end{equation}
where $v_0\in H^{1,2}_\gamma(\mathbb{R}^{N+1})$. By using the method similar to the case of $j>l$ in Theorem \ref{th4}, we can prove that \eqref{hx3} has only zero solution. Hence in \eqref{t6}, $v_0=0$. Then we have the following theorem.

\begin{theorem}\label{yw3}
Under the assumption of Theorem \ref{th4}, the kernel of the linear operator \eqref{hx2} in $H^{1,2}_\gamma(\mathbb{R}^{N+l})$ is precisely
\begin{equation}\label{yw4}
  span\left\{\frac{\partial U(z)}{\partial y_1}, \frac{\partial U(z)}{\partial y_2},\cdots,\frac{\partial U(z)}{\partial y_l}\right\}.
\end{equation}
\end{theorem}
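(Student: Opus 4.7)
The plan is to combine Theorem \ref{th4} with a separate argument for the $j=0$ mode and then translate the resulting spherical-harmonic expansion back into a statement about $y$-derivatives of $U$. The nonlinearity for problem \eqref{hx1} is $f(u)=u^{p}-u$, so $f'(U)=pU^{p-1}-1$ and the eigenmode equation obtained by substituting \eqref{t6} into $Lu=0$ agrees exactly with \eqref{t8}. Hence Theorem \ref{th4} applies and immediately gives
\begin{equation*}
  v_j(x,r)=c_jU^r(x,r)\ \text{for }1\le j\le l,\qquad v_j\equiv 0\ \text{for }j>l.
\end{equation*}

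Next I would dispose of the remaining mode $v_0$, which Theorem \ref{th4} does not cover. The relevant equation is \eqref{hx3}. Since $\mu_0=0$, the centrifugal term $|x|^{2\gamma}\mu_j r^{-2}v_j$ that drove the argument in the $j>l$ case is absent; however, the mass contribution $+v_0$ coming from the $+u$ in \eqref{hx1} can play the same role. The plan is to mimic the weighted-energy computation from Theorem \ref{th4}: multiply \eqref{hx3} by an appropriate power of $r$ times $v_0$, integrate over $\mathbb{R}^{N+1}$, and use the decay assumption \eqref{p21} (which transfers to $v_0$ in the same way) to kill boundary terms. The expected output is a coercivity identity in which the $+v_0^2$ term, together with the integrated first-order piece $-|x|^{2\gamma}\tfrac{l-1}{r}v_0^r v_0$, dominates the indefinite piece $-pU^{p-1}v_0^2$, forcing $v_0\equiv 0$. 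This is the main technical step of the proof and, I expect, the most delicate point: the interplay between the Grushin weight $|x|^{2\gamma}$, the degenerate axis $\{x=0\}$, and the $r^{-1}$ coefficient has to be handled carefully in order for the energy identity to actually be coercive for $j=0$.

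Once $v_0\equiv 0$ and $v_j\equiv 0$ for $j>l$, the expansion \eqref{t6} collapses to
\begin{equation*}
  u(x,r,\omega)=\sum_{j=1}^{l} c_j U^r(x,r)\,\frac{y_j}{|y|}.
\end{equation*}
Since $r=|y|$, the chain rule gives $U^r(x,r)\cdot y_j/|y|=\partial U/\partial y_j$, so every $u\in\ker L$ lies in $\mathrm{span}\{\partial U/\partial y_1,\ldots,\partial U/\partial y_l\}$.

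For the reverse inclusion, I would differentiate the equation $-\Delta_\gamma U+U=U^p$ in the direction $y_j$. Because $\Delta_\gamma=\Delta_x+|x|^{2\gamma}\Delta_y$ has coefficients independent of $y$, the operator $\partial/\partial y_j$ commutes with $\Delta_\gamma$, yielding $L(\partial U/\partial y_j)=0$ for each $j=1,\ldots,l$. Membership in $H^{1,2}_\gamma(\mathbb{R}^{N+l})$ follows from the regularity and decay of $U$. Finally, the $l$ functions $\partial U/\partial y_j$ are linearly independent because $U$ is radial in $y$ and nonconstant, so the map $(c_1,\ldots,c_l)\mapsto \sum c_j\partial_{y_j}U=U^r(x,r)\sum c_j y_j/|y|$ vanishes only when all $c_j=0$. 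Combining the two inclusions yields \eqref{yw4}.
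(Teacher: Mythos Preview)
Your overall architecture is right: invoke Theorem~\ref{th4} for $j\ge 1$, treat $j=0$ separately, and then translate back via $U^r\cdot y_j/|y|=\partial U/\partial y_j$. The reverse inclusion and linear independence are handled correctly.

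The gap is in your plan for $v_0$. You propose to multiply \eqref{hx3} by $r^\alpha v_0$ and hope that the resulting energy identity is coercive, with the mass term $+v_0^2$ and the first-order piece beating $-pU^{p-1}v_0^2$. This will not work as stated: near the peak of $U$ one has $pU^{p-1}>1$ (indeed $pU^{p-1}(z_0)=p\,U(z_0)^{p-1}\ge p>1$ since $U(z_0)\ge 1$ at the maximum), so the zero-order part $1-pU^{p-1}$ is genuinely negative on a set of positive measure and no rearrangement of the first-order term will by itself produce a sign. In the Laplacian case the radial nondegeneracy is exactly the hard part of Kwong's uniqueness theorem and is never obtained by a bare energy inequality of this type.

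What the paper actually does for $j=0$ is the same \emph{comparison-with-$U^r$} trick as in the $j>l$ case of Theorem~\ref{th4}, not a self-pairing. One differentiates \eqref{mn1} in $r$ to obtain the equation for $U^r$, which carries the extra term $+|x|^{2\gamma}\tfrac{l-1}{r^2}U^r$ (coming from differentiating the $\tfrac{l-1}{r}$ coefficient). Cross-multiplying the $v_0$ equation by $U^r$ and the $U^r$ equation by $v_0$ and subtracting kills the indefinite potential $pU^{p-1}$ entirely; what survives are exchange terms $J_1,\dots,J_5$ whose integrals over $\mathbb{R}^N\times[0,r_0]$ (or $[0,r_n]$) are controlled using $U^r<0$, the assumed sign of $v_0$, and the decay \eqref{p21}. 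The resulting sign contradiction forces $v_0\equiv 0$. So the ``weighted-energy computation from Theorem~\ref{th4}'' that you want to mimic is a cross-multiplication with $U^r$, not a multiplication by $v_0$; once you make that substitution your outline goes through.
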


Thirdly, we study the problem \eqref{p10}. Recall the problem \eqref{p10}:
\begin{equation*}
\begin{cases}
-\varepsilon^2\Delta_\gamma u+ a(z)u=u^{q-1},\;\;u>0,~ \mbox{in} ~\mathbb{R}^{N+l},\\[2mm]
u\in H^{1,2}_\gamma(\mathbb{R}^{N+l}).
\end{cases}
\end{equation*}
Assume that $z_\varepsilon=(0,y_\varepsilon)$ be the local maximum point of the solution of \eqref{p10} satisfying Definition \ref{zrf4}. Let $U_{\varepsilon,z_\varepsilon}(z)$ be the solution of
\begin{equation}\label{01.11}
\begin{cases}
-\varepsilon^2\Delta_\gamma u+ a(z_\varepsilon)u=u^{q-1},\;\;&u>0,~ \mbox{in} ~\mathbb{R}^{N+l},\\[2mm]
u(z_\varepsilon)=\displaystyle\max_{z\in\mathbb{R}^{N+l}}u(z), ~&u\in H^{1,2}_\gamma(\mathbb{R}^{N+l}),
\end{cases}
\end{equation}
and
\begin{equation}\label{ddd1}
  z_\varepsilon\rightarrow z_0,\;\;\;~\mbox{as}~\varepsilon\rightarrow0,\;\;~\mbox{where}~z_0=(0,y_0).
\end{equation}
We could regard $U_{\varepsilon,z_\varepsilon}(z)$ as an approximate solution to the equation \eqref{p10}.
Let $w(\widetilde{z})$ be the solution of
\begin{equation}\label{3}
  \begin{cases}
-\Delta_\gamma w+ w=w^{q-1},\;\;&w>0,~ \mbox{in} ~\mathbb{R}^{N+l},\\[2mm]
w(0)=\displaystyle\max_{\widetilde{z}\in\mathbb{R}^{N+l}}w(\widetilde{z}), ~&w\in H^{1,2}_\gamma(\mathbb{R}^{N+l}).
\end{cases}
\end{equation}
We know that $w(\widetilde{z})$ is radially symmetric with respect to $y$ variables and exponential decay at infinity from \cite{wyw}.
By using scaling transformation, we can easily find that the relationship between $U_{\varepsilon,z_\varepsilon}(z)$ and $w(\widetilde{z})$ is
\begin{equation}\label{4}
  U_{\varepsilon,z_\varepsilon}(z)=\big(a(z_\varepsilon)\big)^\frac{1}{q-2}w\left(\frac{\sqrt{a(z_\varepsilon)}}{\varepsilon}x,
  \left(\frac{\sqrt{a(z_\varepsilon)}}{\varepsilon}\right)^{1+\gamma}(y-y_\varepsilon)\right).
\end{equation}
The linear operator associated with the equation \eqref{p10} is
\begin{equation}\label{02.6}
  L_\varepsilon u:=-\varepsilon^2\Delta_\gamma u+a(z)u
  -(q-1)U_{\varepsilon,z_\varepsilon}^{q-2}u.
\end{equation}
The linear operator associated with the equation \eqref{01.11} is
\begin{equation}\label{02.66}
  \widetilde{L}_\varepsilon u:=-\varepsilon^2\Delta_\gamma u+a(z_\varepsilon)u
  -(q-1)U_{\varepsilon,z_\varepsilon}^{q-2}u.
\end{equation}
According to Theorem \ref{yw3}, we know that the kernel of linear operator \eqref{02.66} is
\begin{equation}\label{02.16}
  K_\varepsilon:=span\{\frac{\partial U_{\varepsilon,z_\varepsilon}(z)}{\partial y_j}: j=1,\cdots,l\}.
\end{equation}
Therefore, we take $K_\varepsilon$ as the approximate kernel of $L_\varepsilon$. Define
\begin{equation}\label{02.17}
  E_\varepsilon=K_\varepsilon^\bot:=\{\omega\in H^{1,2}_\gamma(\mathbb{R}^{N+l}):\;\langle\omega,\frac{\partial U_{\varepsilon,z_\varepsilon}(z)}{\partial y_j}\rangle_\varepsilon=0\;j=1,\cdots,l\},
\end{equation}
where $\langle u,v\rangle_\varepsilon:=\displaystyle\int_{\mathbb{R}^{N+l}}\varepsilon^2\nabla_\gamma u\cdot\nabla_\gamma v+a(z)uvdz$. Then $H^{1,2}_\gamma(\mathbb{R}^{N+l})=K_\varepsilon\oplus E_\varepsilon$. In the following, we prove the linear operator \eqref{02.6} is invertible when restricted to the space $E_\varepsilon$.

\begin{theorem}\label{th2.11}
Let $L_\varepsilon$ be the linear operator defined in \eqref{02.6}, $K_\varepsilon$ be the kernel space of linear operator \eqref{02.66} defined in \eqref{02.16}, $E_\varepsilon$ be the the complement space of $K_\varepsilon$ defined in \eqref{02.17}, $Q_\varepsilon$ be the projection from $H_\gamma^{1,2}(\mathbb{R}^{N+l})$ to $E_\varepsilon$ as follows
\begin{equation}\label{201}
  Q_\varepsilon u=u-\sum^l_{j=1}b_j\frac{\partial U_{\varepsilon,z_\varepsilon}(z)}{\partial y_j},
\end{equation}
where $U_{\varepsilon,z_\varepsilon}(z)$ is defined in \eqref{4} and $b_j$ satisfies
\begin{equation}\label{202}
  \langle Q_\varepsilon u, \frac{\partial U_{\varepsilon,z_\varepsilon}(z)}{\partial y_k} \rangle_\varepsilon=0,~\mbox{for}~k=1,\cdots,l.
\end{equation}
Then there exist $\varepsilon_0>0$, $\theta_0>0$ and $\rho>0$, such that for any $\varepsilon\in(0,\varepsilon_0]$ and $z_\varepsilon\in\widetilde{B}_{\theta_0}(z_0)$, $Q_\varepsilon L_\varepsilon$ is a bijective mapping in $E_\varepsilon$, moreover
\begin{equation}\label{100}
  \|Q_\varepsilon L_\varepsilon\omega\|_\varepsilon\geq\rho\|\omega\|_\varepsilon,\;\;\;\; \forall\omega\in E_\varepsilon.
\end{equation}
\end{theorem}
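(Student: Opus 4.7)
The plan is to argue by contradiction using a Lyapunov--Schmidt style blow-up analysis, with Theorem \ref{yw3} serving as the crucial kernel characterization that identifies the weak limit. Suppose \eqref{100} fails; then there exist sequences $\varepsilon_n\downarrow 0$, $z_{\varepsilon_n}=(0,y_{\varepsilon_n})\in\widetilde{B}_{\theta_0}(z_0)$ with $z_{\varepsilon_n}\to z_0$, and $\omega_n\in E_{\varepsilon_n}$ with $\|\omega_n\|_{\varepsilon_n}=1$ and $\|Q_{\varepsilon_n}L_{\varepsilon_n}\omega_n\|_{\varepsilon_n}=o(1)$. Since $Q_{\varepsilon_n}$ is the $\langle\cdot,\cdot\rangle_{\varepsilon_n}$-orthogonal projection onto $E_{\varepsilon_n}$, this translates to
$$\langle L_{\varepsilon_n}\omega_n,\varphi\rangle_{\varepsilon_n}=o(\|\varphi\|_{\varepsilon_n})\qquad\text{for every }\varphi\in E_{\varepsilon_n}.$$
Testing with $\omega_n$ itself yields $(q-1)\int U_{\varepsilon_n,z_{\varepsilon_n}}^{q-2}\omega_n^2\,dz\to 1$, so the mass of $\omega_n$ concentrates in a Grushin ball of radius $O(\varepsilon_n)$ around $z_{\varepsilon_n}$.

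Next, apply the Grushin-natural rescaling suggested by \eqref{4}, namely $\widetilde{x}=(\sqrt{a(z_{\varepsilon_n})}/\varepsilon_n)\,x$ and $\widetilde{y}=(\sqrt{a(z_{\varepsilon_n})}/\varepsilon_n)^{1+\gamma}(y-y_{\varepsilon_n})$, and set $\widetilde{\omega}_n(\widetilde{z}):=\omega_n(z)$. Using the compatibility of this scaling with the vector fields in \eqref{zzxxyy}, the operator $\varepsilon_n^2\Delta_\gamma$ transforms into $a(z_{\varepsilon_n})\widetilde{\Delta}_\gamma$ and $U_{\varepsilon_n,z_{\varepsilon_n}}$ becomes a constant multiple of $w$. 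After absorbing the Jacobian factor $(\varepsilon_n/\sqrt{a(z_{\varepsilon_n})})^{N_\gamma/2}$ into the definition of $\widetilde{\omega}_n$, one checks that $\widetilde{\omega}_n$ is bounded in $H^{1,2}_\gamma(\mathbb{R}^{N+l})$, so along a subsequence $\widetilde{\omega}_n\rightharpoonup\omega^*$ weakly in $H^{1,2}_\gamma$ and strongly in $L^q_{\text{loc}}$. Since $a(z_{\varepsilon_n}+\cdot\,)\to a(z_0)$ locally, passing to the limit in the rescaled equation gives
$$-\Delta_\gamma\omega^*+\omega^*-(q-1)w^{q-2}\omega^*=0\qquad\text{in }\mathbb{R}^{N+l}.$$
Theorem \ref{yw3} forces $\omega^*\in\operatorname{span}\{\partial_{\widetilde{y}_j}w:j=1,\ldots,l\}$. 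Rescaling the orthogonality conditions $\langle\omega_n,\partial_{y_j}U_{\varepsilon_n,z_{\varepsilon_n}}\rangle_{\varepsilon_n}=0$ and passing to the limit yields $\langle\omega^*,\partial_{\widetilde{y}_j}w\rangle_\gamma=0$ for every $j$, which combined with membership in the kernel forces $\omega^*\equiv 0$.

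It remains to upgrade $\omega^*=0$ to a contradiction with $\|\omega_n\|_{\varepsilon_n}=1$. Split $\omega_n$ by a smooth cutoff supported in $\widetilde{B}_{R\varepsilon_n}(z_{\varepsilon_n})$: on the interior, strong $L^q_{\text{loc}}$ convergence $\widetilde{\omega}_n\to 0$ makes the weighted integral $(q-1)\int U_{\varepsilon_n,z_{\varepsilon_n}}^{q-2}\omega_n^2\,dz$ vanish; on the exterior, the exponential decay of $w$ established in \cite{wyw} makes $U_{\varepsilon_n,z_{\varepsilon_n}}^{q-2}$ negligibly small, so $L_{\varepsilon_n}$ there acts essentially as the coercive operator $-\varepsilon_n^2\Delta_\gamma+a(z)$. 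Letting $R\to\infty$ slowly with $n$, this yields $\langle L_{\varepsilon_n}\omega_n,\omega_n\rangle_{\varepsilon_n}\geq c+o(1)$, which contradicts $\|Q_{\varepsilon_n}L_{\varepsilon_n}\omega_n\|_{\varepsilon_n}\to 0$ after noting $\omega_n\in E_{\varepsilon_n}$. This proves the injective estimate \eqref{100}; bijectivity of $Q_\varepsilon L_\varepsilon$ on $E_\varepsilon$ then follows because in the Riesz identification $L_\varepsilon=I+K_\varepsilon$ with $K_\varepsilon$ compact, so $Q_\varepsilon L_\varepsilon$ is Fredholm of index zero on $E_\varepsilon$. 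The main obstacle is the anisotropic Grushin rescaling: verifying that the weighted Sobolev norm, the $\varepsilon$-dependent orthogonality conditions, and the equation all pass cleanly to the limit requires careful bookkeeping of the different scalings in $x$ and $y$, and the exterior tail estimate depends on exponential decay of $w$ being uniform in the rescaled geometry.
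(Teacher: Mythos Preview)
Your proposal is correct and follows essentially the same contradiction/blow-up/nondegeneracy scheme as the paper's proof. Two points of comparison are worth noting. First, you pass directly to the limit equation for $\omega^*$ tested against arbitrary $\psi$, but the hypothesis only gives $\langle L_{\varepsilon_n}\omega_n,\varphi\rangle_{\varepsilon_n}=o(\|\varphi\|_{\varepsilon_n})$ for $\varphi\in E_{\varepsilon_n}$; the paper devotes its Step~3 to closing this gap by writing an arbitrary test function as $Q_{\varepsilon_n}\phi+\sum d_j\partial_{y_j}U_{\varepsilon_n,z_{\varepsilon_n}}$ and estimating the cross terms $r_{n,j}=\langle L_{\varepsilon_n}\omega_n,\partial_{y_j}U_{\varepsilon_n,z_{\varepsilon_n}}\rangle_{\varepsilon_n}$ via \eqref{131}. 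You can shortcut this by observing that the linearized operator is self-adjoint and $\partial_{\widetilde y_j}w$ lies in its kernel, so the limit equation, once known on $(\mathrm{ker})^{\perp}$, automatically extends to all of $H^{1,2}_\gamma$; but this step should be made explicit. Second, for surjectivity you invoke the Fredholm alternative via $Q_\varepsilon L_\varepsilon=I+(\text{compact})$ on $E_\varepsilon$, whereas the paper argues directly that $Q_\varepsilon L_\varepsilon E_\varepsilon$ is closed and then uses self-adjointness to rule out a nontrivial orthogonal complement; both arguments are valid and yours is arguably cleaner, provided you verify that multiplication by $U_{\varepsilon,z_\varepsilon}^{q-2}$ (which decays exponentially) indeed defines a compact operator on $H^{1,2}_\gamma(\mathbb{R}^{N+l})$.
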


The main contributions of this paper are summarized in the following three points. Firstly, the study of the kernel space of the Laplace operator is extensive such as in \cite{bib77,bib88} and the references therein, but there are few results on degenerate operators. In this paper, we study the kernel space of linear operator for nonlinear problem with a kind of classical degenerate operator--Grushin operators. To this end, we overcome the non-symmetry of the Grushin operators in our research process. Secondly, we prove that the kernel space of linear operator associated with nonlinear problem driven by Grushin operator is precisely a linear subspace spanned by finite elements, which corresponds the result of classical nonlinear problem with Laplacian. Thirdly, for the singular perturbed equation with Grushin operator, we obtain that its linear operator is invertible on the orthogonal complement of its approximate kernel space, which is crucial in the process of constructing the solutions.

The paper in the sequel is organized as follows. In Section 2, we study the linear operator related to a general Grushin equation to prove Theorem \ref{th4} by employing the spherical coordinate systems and the separation of variables. In Section 3, we prove Theorem \ref{yw3} to provide an exact expression for the kernel of linear operator for a special Grushin equation. In Section 4, we devote to prove Theorem \ref{th2.11} to obtain the linear operator $L_\varepsilon$ is invertible when restricted to the complement of the kernel of the linear operator $\widetilde{L}_\varepsilon$ by applying blow up method.

\section{Proof of Theorem \ref{th4}}

In this section, we study the kernel of linear operator for the equation \eqref{t1} in Theorem \ref{th4} by using the spherical coordinate systems and the separation of variables.

\noindent\textbf{Proof of Theorem \ref{th4}:}
Since $U(x,|y|)=U(x,r)=U$ is the solution of \eqref{t1}, we have
\begin{equation}\label{t9}
  -\Delta_\gamma U(x,r)-|x|^{2\gamma}\frac{l-1}{r}U^r(x,r)=f(U).
\end{equation}
We differentiate \eqref{t9} with respect to $r$ and get
\begin{equation}\label{t10}
  -\Delta_\gamma U^r(x,r)-|x|^{2\gamma}\frac{l-1}{r}U^{rr}(x,r)+|x|^{2\gamma}\frac{l-1}{r^2}U^r(x,r)=f'(U)U^r(x,r).
\end{equation}
We note that $\mu_j=l-1$ for $j=1,\cdots,l$. By comparing \eqref{t8} and \eqref{t10}, we find that
\begin{equation}\label{t11}
  v_j(x,r)=c_jU^r(x,r),\;\;j=1,\cdots,l,
\end{equation}
for some constant $c_j$, $j=1,\cdots,l$.

To prove $v_j(x,r)=0$ for $j>l$, we compare $v_j(x,r)$ with $U^r(x,r)$. We write \eqref{t7} as
\begin{equation}\label{t12}
  -\Delta_x v_j(x,r)r^{l-1}-|x|^{2\gamma}\big(r^{l-1}v_j^r(x,r)\big)^r+|x|^{2\gamma}\mu_j r^{l-3}v_j(x,r)-r^{l-1}f'(U)v_j(x,r)=0.
\end{equation}
Multiplying \eqref{t12} by $U^r(x,r)$, we obtain
\begin{equation}\label{t13}
\begin{aligned}
  -&\Delta_x v_j(x,r)r^{l-1}U^r(x,r)-|x|^{2\gamma}\big(r^{l-1}v_j^r(x,r)\big)^rU^r(x,r)
  \\+&|x|^{2\gamma}\mu_j r^{l-3}v_j(x,r)U^r(x,r)-r^{l-1}f'(U)v_j(x,r)U^r(x,r)=0.
\end{aligned}
\end{equation}
We write \eqref{t10} as
\begin{equation}\label{t14}
\begin{aligned}
  -&\Delta_x U^r(x,r)r^{l-1}-|x|^{2\gamma}\big(r^{l-1}U^{rr}(x,r)\big)^r
  \\+&|x|^{2\gamma}(l-1)r^{l-3}U^r(x,r)-r^{l-1}f'(U)U^r(x,r)=0.
\end{aligned}
\end{equation}
Multiplying \eqref{t14} by $v_j(x,r)$, we obtain
\begin{equation}\label{t15}
\begin{aligned}
  -&\Delta_x U^r(x,r)r^{l-1}v_j(x,r)-|x|^{2\gamma}\big(r^{l-1}U^{rr}(x,r)\big)^rv_j(x,r)
  \\+&|x|^{2\gamma}(l-1)r^{l-3}U^r(x,r)v_j(x,r)-r^{l-1}f'(U)U^r(x,r)v_j(x,r)=0.
\end{aligned}
\end{equation}
By \eqref{t13}-\eqref{t15}, we obtain
\begin{equation}\label{t16}
\begin{aligned}
  -&\Delta_x v_j(x,r)r^{l-1}U^r(x,r)+\Delta_x U^r(x,r)r^{l-1}v_j(x,r)
  \\-&|x|^{2\gamma}\big(r^{l-1}v_j^r(x,r)\big)^rU^r(x,r)+|x|^{2\gamma}\big(r^{l-1}U^{rr}(x,r)\big)^rv_j(x,r)
  \\+&|x|^{2\gamma}r^{l-3}U^r(x,r)v_j(x,r)\big(\mu_j-(l-1)\big):=I_1+I_2+I_3+I_4+I_5=0.
\end{aligned}
\end{equation}

Suppose that $v_j(x,r)\not\equiv0$ for $j>l$. We could assume that $v_j(x,0)>0$. The situation when $v_j(x,0)<0$ can be considered similarly.

\textbf{Case 1.} Assume that the first zero point of $v_j(x,r)$ is $r_0>0$. Then by integrating each term of \eqref{t16} on $\mathbb{R}^N\times[0,r_0]$, we get some identities as follows.

We first integrate $I_1$ and $I_2$ on $B_r(0)$, where $B_r(0)\subset\mathbb{R}^N$ is a ball in the sense of Euclidean space.
\begin{equation}\label{t17}
\begin{aligned}
     &\int_{B_r(0)}I_1dx\\=&\int_{B_r(0)}-\Delta_x v_j(x,r)r^{l-1}U^r(x,r)dx
  \\=&r^{l-1}\int_{B_r(0)}\nabla_xU^r(x,r)\cdot\nabla_xv_j(x,r)dx
  -r^{l-1}\int_{\partial{B_r(0)}}U^r(x,r)\frac{\partial v_j(x,r)}{\partial\nu_x}dS.
\end{aligned}
\end{equation}
\begin{equation}\label{t18}
\begin{aligned}
     &\int_{B_r(0)}I_2dx\\=&\int_{B_r(0)}\Delta_x U^r(x,r)r^{l-1}v_j(x,r)dx
  \\=&-r^{l-1}\int_{B_r(0)}\nabla_xv_j(x,r)\cdot\nabla_xU^r(x,r)dx
  +r^{l-1}\int_{\partial{B_r(0)}}v_j(x,r)\frac{\partial U^r(x,r)}{\partial\nu_x}dS.
\end{aligned}
\end{equation}
According to our assumption \eqref{p21}, i.e.
\begin{equation*}
  v_j(x,r)=O(r^{-l+1}),\;\;U^r(x,r)=O(r^{-l+1}),\;\;~\mbox{as}~r\rightarrow+\infty,
\end{equation*}
thus as $r\rightarrow+\infty$ we have
\begin{equation}\label{ny1}
  r^{l-1}U^r(x,r)\frac{\partial v_j(x,r)}{\partial\nu_x}\rightarrow0\;\;~\mbox{and}~\;\;  r^{l-1}v_j(x,r)\frac{\partial U^r(x,r)}{\partial\nu_x}
  \rightarrow0.
\end{equation}
When $r\rightarrow+\infty$, we could plus \eqref{t17} and \eqref{t18} and find that:
\begin{equation}\label{t19}
\begin{aligned}
&\int_{\mathbb{R}^N}I_1+I_2\;dx\\=&\int_{\mathbb{R}^N}-\Delta_x v_j(x,r)r^{l-1}U^r(x,r)dx+\int_{\mathbb{R}^N}\Delta_x U^r(x,r)r^{l-1}v_j(x,r)dx
 \\=&r^{l-1}\int_{\mathbb{R}^N}\nabla_xU^r(x,r)\cdot\nabla_xv_j(x,r)dx-r^{l-1}\int_{\mathbb{R}^N}\nabla_xv_j(x,r)\cdot\nabla_xU^r(x,r)dx
 \\=&0.
\end{aligned}
\end{equation}
Then by integrating \eqref{t19} on $[0,r_0]$, the integral is still equal to 0, i.e.
\begin{equation}\label{00000}
\int_0^{r_0}\int_{\mathbb{R}^N}I_1+I_2\;dxdr=0.
\end{equation}

Next, we integrate $I_3$, $I_4$ and $I_5$ on $[0,r_0]$.
\begin{equation}\label{t20}
\begin{aligned}
    \int_0^{r_0}I_3dr=&\int_0^{r_0}-|x|^{2\gamma}\big(r^{l-1}v_j^r(x,r)\big)^rU^r(x,r)dr
 \\=&-|x|^{2\gamma}U^r(x,r)r^{l-1}v_j^r(x,r)\Big|_0^{r_0}+\int_0^{r_0}|x|^{2\gamma}r^{l-1}v_j^r(x,r)U^{rr}(x,r)dr
 \\=&-|x|^{2\gamma}U^r(x,r_0)r_0^{l-1}v_j^r(x,r_0)+\int_0^{r_0}|x|^{2\gamma}r^{l-1}v_j^r(x,r)U^{rr}(x,r)dr
 \\<&\int_0^{r_0}|x|^{2\gamma}r^{l-1}v_j^r(x,r)U^{rr}(x,r)dr,
\end{aligned}
\end{equation}
since $U^r(x,r_0)<0$, $v_j^r(x,r_0)<0$.
\begin{equation}\label{t21}
\begin{aligned}
    \int_0^{r_0}I_4dr=&\int_0^{r_0}|x|^{2\gamma}\big(r^{l-1}U^{rr}(x,r)\big)^rv_j(x,r)dr
 \\=&|x|^{2\gamma}v_j(x,r)r^{l-1}U^{rr}(x,r)\Big|_0^{r_0}-\int_0^{r_0}|x|^{2\gamma}r^{l-1}U^{rr}(x,r)v_j^r(x,r)dr.
 \\=&-\int_0^{r_0}|x|^{2\gamma}r^{l-1}U^{rr}(x,r)v_j^r(x,r)dr
\end{aligned}
\end{equation}
We could plus \eqref{t20} and \eqref{t21} and find that:
\begin{equation}\label{t22}
  \int_0^{r_0}I_3+I_4\;dr<0.
\end{equation}
Then by integrating \eqref{t22} on $\mathbb{R}^N$, the integral is still less than 0, i.e.
\begin{equation}\label{t212}
  \int_{\mathbb{R}^N}\int_0^{r_0}I_3+I_4\;drdx<0.
\end{equation}
\begin{equation}\label{t23}
  \int_0^{r_0}I_5dr=\int_0^{r_0}|x|^{2\gamma}r^{l-3}U^r(x,r)v_j(x,r)\big(\mu_j-(l-1)\big)dr<0,
\end{equation}
since $v_j(x,r)>0$, $\mu_j>l-1$. The integration of \eqref{t23} on $\mathbb{R}^N$ is still less than 0, i.e.
\begin{equation}\label{t213}
  \int_{\mathbb{R}^N}\int_0^{r_0}I_5drdx<0.
\end{equation}

From \eqref{00000}, \eqref{t212} and \eqref{t213}, we obtain the LHS of the integration of \eqref{t16} on $\mathbb{R}^N\times[0,r_0]$ is strictly negative and the RHS of which is equal to $0$. This implies a contradiction. Hence Case 1 is not valid.

\textbf{Case 2.} Suppose that $v_j(x,r)>0$ for all $r>0$. Since $v_j(x,r)\in H^{1,2}_\gamma(\mathbb{R}^{N+1})$, we have
\begin{equation}\label{t24}
  \int_{\mathbb{R}^N}\int_0^{+\infty}|\nabla_xv_j(x,r)|^2+|x|^{2\gamma}\big(v_j^r(x,r)\big)^2+\big(v_j(x,r)\big)^2drdx<+\infty.
\end{equation}
We can find a sequence $\{r_n\}\rightarrow+\infty$ as $n\rightarrow+\infty$, such that
\begin{equation}\label{t25}
  |\nabla_xv_j(x,r_n)|^2+|x|^{2\gamma}\big(v_j^r(x,r_n)\big)^2+\big(v_j(x,r_n)\big)^2\rightarrow0.
\end{equation}
As same as Case 1, we integrate \eqref{t16} on $\mathbb{R}^N\times[0,r_n]$. The sum of the integral of $I_1$ and $I_2$ is as same as before, and we could refer to \eqref{t17}-\eqref{00000} to obtain
\begin{equation}\label{jjb}
   \int_0^{r_n}\int_{\mathbb{R}^N}I_1+I_2\;dxdr=0.
\end{equation}

Now we integrate $I_3$, $I_4$ and $I_5$ on $[0,r_n]$.
\begin{equation}\label{t26}
\begin{aligned}
  \int_0^{r_n}I_3dr=&\int_0^{r_n}-|x|^{2\gamma}\big(r^{l-1}v_j^r(x,r)\big)^rU^r(x,r)dr
\\=&-|x|^{2\gamma}U^r(x,r)r^{l-1}v_j^r(x,r)\Big|_0^{r_n}+\int_0^{r_n}|x|^{2\gamma}r^{l-1}v_j^r(x,r)U^{rr}(x,r)dr
\\=&-|x|^{2\gamma}U^r(x,r_n)r_n^{l-1}v_j^r(x,r_n)+\int_0^{r_n}|x|^{2\gamma}r^{l-1}v_j^r(x,r)U^{rr}(x,r)dr
\\=&\int_0^{r_n}|x|^{2\gamma}r^{l-1}v_j^r(x,r)U^{rr}(x,r)dr,
\end{aligned}
\end{equation}
since the condition $U^r(x,r)=O(r^{-l+1}),\;r\rightarrow+\infty$ and \eqref{t25}.
\begin{equation}\label{t27}
\begin{aligned}
    \int_0^{r_n}I_4dr=&\int_0^{r_n}|x|^{2\gamma}\big(r^{l-1}U^{rr}(x,r)\big)^rv_j(x,r)dr
    \\=&|x|^{2\gamma}v_j(x,r)r^{l-1}U^{rr}(x,r)\Big|_0^{r_n}-\int_0^{r_n}|x|^{2\gamma}r^{l-1}U^{rr}(x,r)v_j^r(x,r)dr
    \\=&|x|^{2\gamma}v_j(x,r_n)r_n^{l-1}U^{rr}(x,r_n)-\int_0^{r_n}|x|^{2\gamma}r^{l-1}U^{rr}(x,r)v_j^r(x,r)dr
    \\=&-\int_0^{r_n}|x|^{2\gamma}r^{l-1}U^{rr}(x,r)v_j^r(x,r)dr,
\end{aligned}
\end{equation}
since the condition $U^r(x,r)=O(r^{-l+1}),\;r\rightarrow+\infty$ and \eqref{t25}.
Plus \eqref{t26} and \eqref{t27}, we find that
\begin{equation}\label{t28}
  \int_0^{r_n}I_3+I_4\;dr\rightarrow0,\;\;n\rightarrow\infty.
\end{equation}
The integration of \eqref{t28} on $\mathbb{R}^N$ is still tend to 0, i.e.
\begin{equation}\label{zmn1}
  \int_{\mathbb{R}^N}\int_0^{r_n}I_3+I_4\;drdx\rightarrow0,\;\;n\rightarrow\infty.
\end{equation}
The integration of $I_5$ is as same as \eqref{t23}, then we can similarly obtain that the following estimate
\begin{equation}\label{t29}
  \int_{\mathbb{R}^N}\int_0^{r_n}I_5drdx<0.
\end{equation}
From \eqref{jjb}, \eqref{zmn1} and \eqref{t29}, we obtain the LHS of the integration of \eqref{t16} on $\mathbb{R}^N\times[0,r_n]$ is strictly negative and the RHS of which is equal to $0$. This contradicts with $v_j(x,r)>0$. Hence Case 2 cannot hold. Therefore, the only situation is $v_j(x,r)\equiv0$ for $j>l$. We have proved this theorem.
\begin{flushright}
  $\square$
\end{flushright}

\section{Proof of Theorem \ref{yw3}}

In this section, we prove Theorem \ref{yw3} to provide an exact expression for the kernel of linear operator for the equation \eqref{hx1}, which is based on Theorem \ref{th4}.

\noindent\textbf{Proof of Theorem \ref{yw3}:}
Let $U(x,|y|)=U(x,r)=U$ be the solution of \eqref{hx1}, then we have
\begin{equation}\label{mn1}
  -\Delta_\gamma U(x,r)-|x|^{2\gamma}\frac{l-1}{r}U^r(x,r)+U(x,r)-U^p(x,r)=0.
\end{equation}
We differentiate \eqref{mn1} with respect to $r$ and get
\begin{equation}\label{mn2}
\begin{aligned}
  -&\Delta_\gamma U^r(x,r)-|x|^{2\gamma}\frac{l-1}{r}U^{rr}(x,r)
  +|x|^{2\gamma}\frac{l-1}{r^2}U^r(x,r)
  \\+&U^r(x,r)-pU^{p-1}(x,r)U^r(x,r)=0.
\end{aligned}
\end{equation}
Note that $\mu_0=0$, thus $v_0(x,r)$ satisfies
\begin{equation}\label{mn3}
  -\Delta_\gamma v_0(x,r)-|x|^{2\gamma}\frac{l-1}{r}v_0^r(x,r)+v_0(x,r)-pU^{p-1}(x,r)v_0(x,r)=0.
\end{equation}
Multiplying \eqref{mn2} by $v_0(x,r)$ and subtracting \eqref{mn3} multiplied by $U^r(x,r)$, we obtain
\begin{equation}\label{mn4}
\begin{aligned}
  &-\Delta_\gamma U^r(x,r)v_0(x,r)+\Delta_\gamma v_0(x,r)U^r(x,r)
\\&-|x|^{2\gamma}\frac{l-1}{r}U^{rr}(x,r)v_0(x,r)+|x|^{2\gamma}\frac{l-1}{r}v_0^r(x,r)U^r(x,r)
\\&+|x|^{2\gamma}\frac{l-1}{r^2}U^r(x,r)v_0(x,r):=J_1+J_2+J_3+J_4+J_5=0.
\end{aligned}
\end{equation}

Suppose that $v_0(x,r)\not\equiv0$. We could assume that $v_0(x,0)>0$. The situation when $v_0(x,0)<0$ can be considered similarly.

\textbf{Case 1.} Assume that the first zero point of $v_0(x,r)$ is $r_0>0$. Then by integrating each term of \eqref{mn4} on $\mathbb{R}^N\times[0,r_0]$, we get some identities as follows.

We first integrate $J_1$ and $J_2$ on $\Omega=B_r(0)\times[0,r_0]$, where $B_r(0)\subset\mathbb{R}^N$ is a ball in the sense of Euclidean space. And we denote the unit outward normal of $\partial\Omega$ by $\nu=(\nu_x,\nu_r)$.
\begin{equation}\label{mn5}
\begin{aligned}
  \int_\Omega J_1dxdr
  =&\int_\Omega-\Delta_\gamma U^r(x,r)v_0(x,r)dxdr
  \\=&\int_\Omega\nabla_\gamma v_0(x,r)\cdot\nabla_\gamma U^r(x,r)dxdr
  \\-&\int_{\partial\Omega}v_0(x,r)\left(\frac{\partial U^r(x,r)}{\partial\nu_x}+|x|^{2\gamma}\frac{\partial U^r(x,r)}{\partial\nu_r}\right)dS.
\end{aligned}
\end{equation}
\begin{equation}\label{mn6}
\begin{aligned}
  \int_\Omega J_2dxdr
  =&\int_\Omega\Delta_\gamma v_0(x,r)U^r(x,r)dxdr
  \\=&-\int_\Omega\nabla_\gamma U^r(x,r)\cdot\nabla_\gamma v_0(x,r)dxdr
  \\+&\int_{\partial\Omega}U^r(x,r)\left(\frac{\partial v_0(x,r)}{\partial\nu_x}+|x|^{2\gamma}\frac{\partial v_0(x,r)}{\partial\nu_r}\right)dS.
\end{aligned}
\end{equation}
According to condition \eqref{p21}, i.e.
\begin{equation*}
  v_j(x,r)=O(r^{-l+1}),\;\;U^r(x,r)=O(r^{-l+1}),\;\;~\mbox{as}~r\rightarrow+\infty,
\end{equation*}
thus as $r\rightarrow+\infty$ we have
\begin{equation}\label{mn7}
  \int_{\partial\Omega}v_0(x,r)\left(\frac{\partial U^r(x,r)}{\partial\nu_x}+|x|^{2\gamma}\frac{\partial U^r(x,r)}{\partial\nu_r}\right)dS\rightarrow0,
\end{equation}
and
\begin{equation}\label{mn8}
  \int_{\partial\Omega}U^r(x,r)\left(\frac{\partial v_0(x,r)}{\partial\nu_x}+|x|^{2\gamma}\frac{\partial v_0(x,r)}{\partial\nu_r}\right)dS\rightarrow0.
\end{equation}
When $r\rightarrow+\infty$, we could plus \eqref{mn5} and \eqref{mn6} and find that:
\begin{equation}\label{mn9}
\int_{\mathbb{R}^N\times[0,r_0]}J_1+J_2\;dxdr=0.
\end{equation}

Next, we integrate $J_3$, $J_4$ and $J_5$ on $[0,r_0]$.
\begin{equation}\label{mn10}
\begin{aligned}
  \int_0^{r_0}J_3dr
  &=\int_0^{r_0}-|x|^{2\gamma}\frac{l-1}{r}U^{rr}(x,r)v_0(x,r)dr
\\&=-|x|^{2\gamma}\frac{l-1}{r}v_0(x,r)U^r(x,r)\Big|_0^{r_0}+\int_0^{r_0}J_4dr-\int_0^{r_0}J_5dr.
\end{aligned}
\end{equation}
\begin{equation}\label{mn11}
  \int_0^{r_0}J_4dr=\int_0^{r_0}|x|^{2\gamma}\frac{l-1}{r}v_0^r(x,r)U^r(x,r)dr>0,
\end{equation}
since $U^r(x,r_0)<0$, $v_j^r(x,r_0)<0$. Then we have
\begin{equation}\label{mn12}
  \int_0^{r_0}J_3+J_4+J_5\;dr=-|x|^{2\gamma}\frac{l-1}{r}v_0(x,r)U^r(x,r)\Big|_0^{r_0}+2\int_0^{r_0}J_4dr=+\infty.
\end{equation}
By integrating \eqref{mn12} on $\mathbb{R}^N$, the integral is still divergent.

From \eqref{mn4}, \eqref{mn9} and \eqref{mn12}, we obtain the LHS of the integration of \eqref{mn4} on $\mathbb{R}^N\times[0,r_0]$ is divergent and the RHS of which is equal to $0$. This implies a contradiction. Hence Case 1 is not valid.

\textbf{Case 2.} Suppose that $v_0(x,r)>0$ for all $r>0$. Since $v_0(x,r)\in H^{1,2}_\gamma(\mathbb{R}^{N+1})$, we have
\begin{equation}\label{mn13}
  \int_{\mathbb{R}^N}\int_0^{+\infty}|\nabla_xv_0(x,r)|^2+|x|^{2\gamma}\big(v_0^r(x,r)\big)^2+\big(v_0(x,r)\big)^2drdx<+\infty.
\end{equation}
We can find a sequence $\{r_n\}\rightarrow+\infty$ as $n\rightarrow+\infty$, such that
\begin{equation}\label{mn14}
  |\nabla_xv_0(x,r_n)|^2+|x|^{2\gamma}\big(v_0^r(x,r_n)\big)^2+\big(v_0(x,r_n)\big)^2\rightarrow0.
\end{equation}
As same as Case 1, we integrate \eqref{mn4} on $\mathbb{R}^N\times[0,r_n]$. The sum of the integral of $J_1$ and $J_2$ is as same as the process of \eqref{mn5}--\eqref{mn9}, we obtain
\begin{equation}\label{mn15}
  \int_{\mathbb{R}^N\times[0,r_n]}J_1+J_2\;dxdr=0.
\end{equation}
Next, we integrate $J_3$, $J_4$ and $J_5$ on $[0,r_n]$.
\begin{equation}\label{mn16}
\begin{aligned}
  \int_0^{r_n}J_4dr&=\int_0^{r_n}|x|^{2\gamma}\frac{l-1}{r}v_0^r(x,r)U^r(x,r)dr
  \\&=|x|^{2\gamma}\frac{l-1}{r}U^r(x,r)v_0(x,r)\Big|_0^{r_n}+\int_0^{r_n}J_3dr+\int_0^{r_n}J_5dr.
\end{aligned}
\end{equation}
\begin{equation}\label{mn17}
  \int_0^{r_n}J_3dr=\int_0^{r_n}-|x|^{2\gamma}\frac{l-1}{r}U^{rr}(x,r)v_0(x,r)dr<0,
\end{equation}
since $U^{rr}(x,r)>0$ according to condition \eqref{p21}.
\begin{equation}\label{mn18}
  \int_0^{r_n}J_5dr=\int_0^{r_n}|x|^{2\gamma}\frac{l-1}{r^2}U^r(x,r)v_0(x,r)dr<0.
\end{equation}
By \eqref{p21} and \eqref{mn14}, as $r_n\rightarrow\infty$ we have
\begin{equation}\label{mn19}
  \int_0^{r_n}J_3+J_4+J_5dr=|x|^{2\gamma}\frac{l-1}{r}U^r(x,r)v_0(x,r)\Big|_0^{r_n}
  +2\int_0^{r_n}J_3dr+2\int_0^{r_n}J_5dr=-\infty.
\end{equation}
By integrating \eqref{mn19} on $\mathbb{R}^N$, the integral is still divergent.

From \eqref{mn4}, \eqref{mn15} and \eqref{mn19}, we obtain the LHS of the integration of \eqref{mn4} on $\mathbb{R}^N\times[0,r_0]$ is divergent and the RHS of which is equal to $0$. This implies a contradiction. Hence Case 2 is not valid. Therefore, the only situation is $v_0(x,r)\equiv0$. We complete the proof.
\begin{flushright}
$\square$
\end{flushright}

\section{Proof of Theorem \ref{th2.11}}

In this section, we prove the linear operator $L_\varepsilon$ is invertible when restricted to the space $E_\varepsilon$ in Theorem \ref{th2.11} by applying blow up method.

Before that, we study the projection on $E_\varepsilon$ for any function $u\in H_\gamma^{1,2}(\mathbb{R}^{N+l})$.
We define the projection $Q_\varepsilon$ from $H_\gamma^{1,2}(\mathbb{R}^{N+l})$ to $E_\varepsilon$ as follows
\begin{equation*}
  Q_\varepsilon u=u-\sum^l_{j=1}b_j\frac{\partial U_{\varepsilon,z_\varepsilon}(z)}{\partial y_j},
\end{equation*}
where $U_{\varepsilon,z_\varepsilon}(z)$ is defined in \eqref{4} and $b_j$ satisfies
\begin{equation*}
  \langle Q_\varepsilon u, \frac{\partial U_{\varepsilon,z_\varepsilon}(z)}{\partial y_k} \rangle_\varepsilon=0,~\mbox{for}~k=1,\cdots,l,
\end{equation*}
i.e.
\begin{equation}\label{203}
  \sum^l_{j=1}\left\langle \frac{\partial U_{\varepsilon,z_\varepsilon}(z)}{\partial y_j},\frac{\partial U_{\varepsilon,z_\varepsilon}(z)}{\partial y_k}\right\rangle_\varepsilon b_j
  =\langle u,\frac{\partial U_{\varepsilon,z_\varepsilon}(z)}{\partial y_k}\rangle_\varepsilon,~\mbox{for}~k=1,\cdots,l.
\end{equation}

Next proposition tells us we can solve $b=(b_1,b_2,\cdots b_l)$ from \eqref{203}.

\begin{proposition}\label{lem00}
The algebraic equation \eqref{203} with respect to the variable $b_j$ is solvable, and we have the following estimate
\begin{equation}\label{204}
  |b_j|\leq C\varepsilon^{-\frac{(1+\gamma) (l-2)}{2}}\|u\|_\varepsilon.
\end{equation}
\end{proposition}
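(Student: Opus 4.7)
The system \eqref{203} is of the form $Ab = v$ with the $l \times l$ Gram matrix $A_{jk} = \langle \partial_{y_j} U_{\varepsilon,z_\varepsilon}, \partial_{y_k} U_{\varepsilon,z_\varepsilon}\rangle_\varepsilon$ and $v_k = \langle u, \partial_{y_k} U_{\varepsilon,z_\varepsilon}\rangle_\varepsilon$. The plan is to prove invertibility and the asserted bound in three moves: use the explicit scaling \eqref{4} to express $A$ as an $\varepsilon$-power times a fixed integral in the blown-up variable $\widetilde{z}$; exploit the radial symmetry of the ground state $w$ in $\widetilde{y}$ to show that $A$ is, to leading order, a positive constant times the identity; and then apply Cauchy--Schwarz componentwise to $v$.

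For the scaling step, set $\lambda = \sqrt{a(z_\varepsilon)}/\varepsilon$ and change variables $\widetilde{z} = (\lambda x, \lambda^{1+\gamma}(y - y_\varepsilon))$. Differentiating \eqref{4} gives $\partial_{y_j} U_{\varepsilon,z_\varepsilon}(z) = a(z_\varepsilon)^{1/(q-2)} \lambda^{1+\gamma} (\partial_{\widetilde{y}_j} w)(\widetilde{z})$. Since $-\Delta_\gamma$ is homogeneous of degree $2$ under the parabolic dilation $(x,y) \mapsto (\lambda x, \lambda^{1+\gamma} y)$ and $\varepsilon^2 \lambda^2 = a(z_\varepsilon)$, the Dirichlet form
$$\int_{\mathbb{R}^{N+l}} \bigl[\varepsilon^2 \nabla_\gamma \phi_j \cdot \nabla_\gamma \phi_k + a(z_\varepsilon)\,\phi_j\phi_k\bigr] dz, \qquad \phi_j := \partial_{y_j} U_{\varepsilon,z_\varepsilon},$$
reduces cleanly to a single power $\varepsilon^{\sigma}$ with $\sigma = N_\gamma - 2(1+\gamma)$ times a fixed $\widetilde{z}$-integral of the Grushin-Dirichlet pairing of $\partial_{\widetilde{y}_j}w$ and $\partial_{\widetilde{y}_k}w$. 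Replacing the slowly-varying $a(z)$ by $a(z_\varepsilon)$ on the integrand produces only a $o(\varepsilon^\sigma)$ correction, by the exponential decay of $w$ recalled before \eqref{4}.

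For the diagonalization step, the radiality of $w$ in $\widetilde{y}$ gives $\partial_{\widetilde{y}_j} w = (\widetilde{y}_j/|\widetilde{y}|) w_r(\widetilde{x}, |\widetilde{y}|)$, so for $j \neq k$ every piece of the integrand contains the factor $\widetilde{y}_j \widetilde{y}_k/|\widetilde{y}|^2$, whose spherical average on $\mathbb{S}^{l-1}$ vanishes; and by rotation invariance in $\widetilde{y}$ all diagonal entries reduce to the same positive constant $c_0 > 0$ (positivity being immediate from $\partial_{\widetilde{y}_j} w \not\equiv 0$, since $w$ is nontrivial in the $\widetilde{y}$-variable). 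Hence
$$A \;=\; c_0\, \varepsilon^{\sigma}\, (I_{l\times l} + R_\varepsilon), \qquad \|R_\varepsilon\| \to 0 \ \text{as}\ \varepsilon \to 0,$$
which gives unique solvability of \eqref{203} for $\varepsilon \in (0,\varepsilon_0]$ and $z_\varepsilon \in \widetilde{B}_{\theta_0}(z_0)$, together with $\|A^{-1}\|_{\ell^\infty\to\ell^\infty} \leq C \varepsilon^{-\sigma}$.

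Finally, Cauchy--Schwarz and the same scaling yield $\|\partial_{y_k} U_{\varepsilon,z_\varepsilon}\|_\varepsilon \leq C\varepsilon^{\sigma/2}$, hence $|v_k| \leq C \varepsilon^{\sigma/2}\|u\|_\varepsilon$; combining with the bound on $A^{-1}$, $|b_j| \leq \|A^{-1}\| \max_k |v_k| \leq C \varepsilon^{-\sigma/2}\|u\|_\varepsilon$, which matches \eqref{204} after explicit computation of $\sigma/2$. The main obstacle is the \emph{two-sided} control of $A$: Cauchy--Schwarz alone only furnishes an upper bound on each $A_{kk}$, so the real work is certifying the off-diagonal vanishing and the strict positivity $c_0 > 0$ --- both of which rely on the rotational structure of $w$ in $\widetilde{y}$ given by \eqref{3} --- to ensure that the leading Gram matrix is nondegenerate in the limit $\varepsilon \to 0$.
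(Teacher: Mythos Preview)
Your argument follows the same strategy as the paper's: compute the Gram matrix $A_{jk}=\langle\partial_{y_j}U_{\varepsilon,z_\varepsilon},\partial_{y_k}U_{\varepsilon,z_\varepsilon}\rangle_\varepsilon$ by rescaling to the ground state $w$, use the $y$-radiality of $w$ to see that the off-diagonal entries vanish and the diagonal ones equal a common positive constant, conclude $A=\varepsilon^{\sigma}(c_0 I+o(1))$, invert, and finish with Cauchy--Schwarz on $v_k$. The only cosmetic difference is that the paper first invokes the equation for $\partial_{y_j}U_{\varepsilon,z_\varepsilon}$ (its \eqref{206}--\eqref{210}) to rewrite $A_{jk}$ as $(q-1)\int U_{\varepsilon,z_\varepsilon}^{q-2}\,\partial_{y_j}U_{\varepsilon,z_\varepsilon}\,\partial_{y_k}U_{\varepsilon,z_\varepsilon}\,dz$ plus an $(a(z)-a(z_\varepsilon))$-error before scaling, whereas you scale the full $\langle\cdot,\cdot\rangle_\varepsilon$ form directly; these two routes produce the same leading term.

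One point to flag in your final sentence: your own exponent $\sigma=N_\gamma-2(1+\gamma)=N+(1+\gamma)(l-2)$ gives $\sigma/2=\tfrac{N}{2}+\tfrac{(1+\gamma)(l-2)}{2}$, which does \emph{not} literally coincide with the exponent $\tfrac{(1+\gamma)(l-2)}{2}$ printed in \eqref{204}. The paper's scaling step \eqref{211}--\eqref{213} changes only the $y$-variable and omits the $\varepsilon^{N}$ contributed by the $x$-rescaling, so its stated exponent is short by $N/2$; your $\sigma$ is the correct homogeneity. This is harmless for the downstream use of the proposition (only invertibility of $A$ and some polynomial-in-$\varepsilon$ control are needed), but your claim that the bound ``matches \eqref{204}'' should be read modulo this slip in the paper rather than as an exact equality of exponents.
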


\begin{proof}
Since $U_{\varepsilon,z_\varepsilon}(z)$ satisfies
\begin{equation}\label{205}
  -\varepsilon^2\Delta_\gamma U_{\varepsilon,z_\varepsilon}(z)+a(z_\varepsilon)U_{\varepsilon,z_\varepsilon}(z)=U_{\varepsilon,z_\varepsilon}^{q-1}(z).
\end{equation}
Then differentiate with respect to $y_j$ for \eqref{205}, and it holds
\begin{equation}\label{206}
  -\varepsilon^2\Delta_\gamma \frac{\partial U_{\varepsilon,z_\varepsilon}(z)}{\partial y_j}+a(z_\varepsilon)\frac{\partial U_{\varepsilon,z_\varepsilon}(z)}{\partial y_j}=(q-1)U_{\varepsilon,z_\varepsilon}^{q-2}(z)\frac{\partial U_{\varepsilon,z_\varepsilon}(z)}{\partial y_j}.
\end{equation}
For any test function $\psi\in H_\gamma^{1,2}(\mathbb{R}^{N+l})$, we have
\begin{equation}\label{207}
\begin{aligned}
  &\int_{\mathbb{R}^{N+l}}\varepsilon^2\nabla_\gamma\frac{\partial U_{\varepsilon,z_\varepsilon}(z)}{\partial y_j}\cdot\nabla_\gamma\psi+a(z_\varepsilon)\frac{\partial U_{\varepsilon,z_\varepsilon}(z)}{\partial y_j}\psi dz\\=&(q-1)\int_{\mathbb{R}^{N+l}}U_{\varepsilon,z_\varepsilon}^{q-2}(z)\frac{\partial U_{\varepsilon,z_\varepsilon}(z)}{\partial y_j}\psi dz.
\end{aligned}
\end{equation}
We rewrite \eqref{207} and obtain
\begin{equation}\label{208}
\begin{aligned}
  &\int_{\mathbb{R}^{N+l}}\varepsilon^2\nabla_\gamma\frac{\partial U_{\varepsilon,z_\varepsilon}(z)}{\partial y_j}\cdot\nabla_\gamma\psi+a(z)\frac{\partial U_{\varepsilon,z_\varepsilon}(z)}{\partial y_j}\psi dz
  \\=&\int_{\mathbb{R}^{N+l}}\big(a(z)-a(z_\varepsilon)\big)\frac{\partial U_{\varepsilon,z_\varepsilon}(z)}{\partial y_j}\psi dz
  +(q-1)\int_{\mathbb{R}^{N+l}}U_{\varepsilon,z_\varepsilon}^{q-2}(z)\frac{\partial U_{\varepsilon,z_\varepsilon}(z)}{\partial y_j}\psi dz,
\end{aligned}
\end{equation}
i.e.
\begin{equation}\label{209}
\begin{aligned}
  &\langle\frac{\partial U_{\varepsilon,z_\varepsilon}(z)}{\partial y_j},\psi\rangle_\varepsilon
  \\=&\int_{\mathbb{R}^{N+l}}\big(a(z)-a(z_\varepsilon)\big)\frac{\partial U_{\varepsilon,z_\varepsilon}(z)}{\partial y_j}\psi dz
  +(q-1)\int_{\mathbb{R}^{N+l}}U_{\varepsilon,z_\varepsilon}^{q-2}(z)\frac{\partial U_{\varepsilon,z_\varepsilon}(z)}{\partial y_j}\psi dz.
\end{aligned}
\end{equation}
Take $\psi=\frac{\partial U_{\varepsilon,z_\varepsilon}(z)}{\partial y_k}$, then the LHS of \eqref{209} gives that
 \begin{equation}\label{210}
\begin{aligned}
  &\langle\frac{\partial U_{\varepsilon,z_\varepsilon}(z)}{\partial y_j},\frac{\partial U_{\varepsilon,z_\varepsilon}(z)}{\partial y_k}\rangle_\varepsilon
  \\=&\int_{\mathbb{R}^{N+l}}\big(a(z)-a(z_\varepsilon)\big)\frac{\partial U_{\varepsilon,z_\varepsilon}(z)}{\partial y_j}\frac{\partial U_{\varepsilon,z_\varepsilon}(z)}{\partial y_k} dz
  \\+&(q-1)\int_{\mathbb{R}^{N+l}}U_{\varepsilon,z_\varepsilon}^{q-2}(z)\frac{\partial U_{\varepsilon,z_\varepsilon}(z)}{\partial y_j}\frac{\partial U_{\varepsilon,z_\varepsilon}(z)}{\partial y_k} dz.
\end{aligned}
\end{equation}
Let
\begin{equation}\label{zmm}
  \widetilde{y}=(\frac{\sqrt{a(0)}}{\varepsilon})^{1+\gamma}y,\;\;\;\;r=|\widetilde{y}|.
\end{equation}
The second term of RHS in \eqref{210} gives that
\begin{equation}\label{211}
\begin{aligned}
&\int_{\mathbb{R}^{N+l}}U_{\varepsilon,z_\varepsilon}^{q-2}(z)\frac{\partial U_{\varepsilon,z_\varepsilon}(z)}{\partial y_j}\frac{\partial U_{\varepsilon,z_\varepsilon}(z)}{\partial y_k} dz
\\=&\int_{\mathbb{R}^{N+l}}Cw^{q-2}(\frac{\sqrt{a(0)}}{\varepsilon}x,r)\left(\frac{\partial w(\frac{\sqrt{a(0)}}{\varepsilon}x,r)}{\partial r}\right)^2
y_jy_k \varepsilon^{(1+\gamma)(l-2)}dxdy
\\=&\varepsilon^{(1+\gamma)(l-2)}\left(\delta_{jk}c_j+o(1)\right),
\end{aligned}
\end{equation}
where $\delta_{jk}=0$ if $j\neq k$ and $\delta_{jj}=1$, $c_j>0$ is a constant, $C=C(a(0))$.\\
Let
\begin{equation}\label{jb}
  \widetilde{x}=\frac{\sqrt{a(z_\varepsilon)}}{\varepsilon}x,\;\;\;\; \widetilde{y}=(\frac{\sqrt{a(z_\varepsilon)}}{\varepsilon})^{1+\gamma}(y-y_\varepsilon),
\end{equation}
then for the first term of RHS in \eqref{210}, we have
\begin{equation}\label{212}
\begin{aligned}
&\int_{\mathbb{R}^{N+l}}\big(a(z)-a(z_\varepsilon)\big)\frac{\partial U_{\varepsilon,z_\varepsilon}(z)}{\partial y_j}\frac{\partial U_{\varepsilon,z_\varepsilon}(z)}{\partial y_k}dz
\\=&\int_{\mathbb{R}^{N+l}}\left(\nabla a(z_\varepsilon)\cdot(z-z_\varepsilon)+O(d^2(z,z_\varepsilon))\right)\frac{\partial U_{\varepsilon,z_\varepsilon}(z)}{\partial y_j}\frac{\partial U_{\varepsilon,z_\varepsilon}(z)}{\partial y_k}dz
\\=&O(\varepsilon^{N_\gamma-1-2\gamma}).
\end{aligned}
\end{equation}
Combining with \eqref{210}, \eqref{211}, and \eqref{212}, finally we get

\begin{equation}\label{213}
  \langle\frac{\partial U_{\varepsilon,z_\varepsilon}(z)}{\partial y_j},\frac{\partial U_{\varepsilon,z_\varepsilon}(z)}{\partial y_k}\rangle_\varepsilon
=\varepsilon^{(1+\gamma)(l-2)}\left(\delta_{jk}c_j+o(1)\right).
\end{equation}
Hence, \eqref{203} is solvable, and we have the following estimate
\begin{equation}\label{214}
\begin{aligned}
  |b_j|\leq& C\varepsilon^{-(1+\gamma)(l-2)}\left|\langle u,\frac{\partial U_{\varepsilon,z_\varepsilon}(z)}{\partial y_k}\rangle_\varepsilon\right|
  \\\leq&C\varepsilon^{-\frac{(1+\gamma)(l-2)}{2}}\|u\|_\varepsilon.
\end{aligned}
\end{equation}
\qed
\end{proof}

\noindent\textbf{Proof of Theorem \ref{th2.11}:}
\noindent\textbf{Step 1: By contradiction to prove \eqref{100}}\\
We will show that there exist $\varepsilon_0>0$, $\theta_0>0$ and $\rho>0$, such that for any $\varepsilon\in(0,\varepsilon_0]$, $z_\varepsilon\in\widetilde{B}_{\theta_0}(z_0)$, and $\omega\in E_\varepsilon$, it satisfies
\begin{equation}\label{cvb}
    \|Q_\varepsilon L_\varepsilon\omega\|_\varepsilon\geq\rho\|\omega\|_\varepsilon.
\end{equation}

We prove this statement by contradiction. Assume that for any $\varepsilon_0>0$, $\theta_0>0$ and $\rho>0$, there exist $\varepsilon_\rho\in(0,\varepsilon_0]$, $z_{\varepsilon_\rho}\in\widetilde{B}_{\theta_0}(z_0)$, and $\omega_\rho\in E_{\varepsilon_\rho}$, such that
\begin{equation}\label{cvb1}
  \|Q_{\varepsilon_\rho} L_{\varepsilon_\rho} \omega_\rho\|_{\varepsilon_\rho}<\rho\|\omega_\rho\|_{\varepsilon_\rho}.
\end{equation}
For convenience, let $\rho=\frac{1}{n}$ and put the above statement in another way, that is to say, suppose that for any given $n>0$, there exist $\varepsilon_n\rightarrow0$, $z_{\varepsilon_n}\rightarrow z_0$, $\omega_n\in E_{\varepsilon_n}$, such that
\begin{equation}\label{101}
  \|Q_{\varepsilon_n} L_{\varepsilon_n} \omega_n\|_{\varepsilon_n}<\frac{1}{n}\|\omega_n\|_{\varepsilon_n}.
\end{equation}
Without loss of generality, we take
\begin{equation}\label{lxp}
  \|\omega_n\|_{\varepsilon_n}^2=\varepsilon_n^{N_\gamma}.
\end{equation}
For any $\phi\in E_{\varepsilon_n}$, by \eqref{02.2} and \eqref{02.6} it holds
\begin{equation}\label{102}
\begin{aligned}
&\int_{\mathbb{R}^{N+l}}\varepsilon_n^2\nabla_\gamma\omega_n \cdot\nabla_\gamma\phi+a(z)\omega_n(z)\phi(z)-(q-1)U_{\varepsilon_n,z_{\varepsilon_n}}^{q-2}(z)\omega_n(z)\phi(z)dz
\\=&\langle L_{\varepsilon_n}\omega_n, \phi\rangle_{\varepsilon_n}=\langle Q_{\varepsilon_n}L_{\varepsilon_n}\omega_n, \phi\rangle_{\varepsilon_n}
\\\overset{\eqref{101}}=&o(1)\|\omega_n\|_{\varepsilon_n}\|\phi\|_{\varepsilon_n}=o(\varepsilon_n^\frac{N_\gamma}{2})\|\phi\|_{\varepsilon_n}.
\end{aligned}
\end{equation}
By taking $\phi=\omega_n$ in \eqref{102}, we can obtain
\begin{equation}\label{103}
\int_{\mathbb{R}^{N+l}}\varepsilon_n^2|\nabla_\gamma\omega_n|^2+a(z)\omega_n^2(z)-(q-1)U_{\varepsilon_n,z_{\varepsilon_n}}^{q-2}(z)\omega_n^2(z)dz=o(\varepsilon_n^{N_\gamma}).
\end{equation}
In \eqref{103}, we can take $R>0$ large enough, such that
\begin{equation}\label{104}
  (q-1)U_{\varepsilon_n,z_{\varepsilon_n}}^{q-2}(z)\leq\frac{1}{2}a(z)\;\;\;\;~\mbox{in}~\mathbb{R}^{N+l}\setminus\widetilde{B}_{R\varepsilon_n}(z_{\varepsilon_n}).
\end{equation}
Then the LHS of \eqref{103} becomes
\begin{equation}\label{105}
\begin{aligned}
&\int_{\mathbb{R}^{N+l}}\varepsilon_n^2|\nabla_\gamma\omega_n|^2+a(z)\omega_n^2(z)-(q-1)U_{\varepsilon_n,z_{\varepsilon_n}}^{q-2}(z)\omega_n^2(z)dz
\\=&\|\omega_n\|_{\varepsilon_n}^2-(q-1)\int_{\widetilde{B}_{R\varepsilon_n}(z_{\varepsilon_n})}U_{\varepsilon_n,z_{\varepsilon_n}}^{q-2}(z)\omega_n^2(z)dz
\\-&\int_{\mathbb{R}^{N+l}\setminus\widetilde{B}_{R\varepsilon_n}(z_{\varepsilon_n})}(q-1)U_{\varepsilon_n,z_{\varepsilon_n}}^{q-2}(z)\omega_n^2(z)dz
\\\overset{\eqref{104}}\geq&\|\omega_n\|_{\varepsilon_n}^2-(q-1)\int_{\widetilde{B}_{R\varepsilon_n}(z_{\varepsilon_n})}U_{\varepsilon_n,z_{\varepsilon_n}}^{q-2}(z)\omega_n^2(z)dz
\\-&\frac{1}{2}\int_{\mathbb{R}^{N+l}\setminus\widetilde{B}_{R\varepsilon_n}(z_{\varepsilon_n})}a(z)\omega_n^2(z)dz
\\\geq&\|\omega_n\|_{\varepsilon_n}^2-(q-1)\int_{\widetilde{B}_{R\varepsilon_n}(z_{\varepsilon_n})}U_{\varepsilon_n,z_{\varepsilon_n}}^{q-2}(z)\omega_n^2(z)dz
-\frac{1}{2}\int_{\mathbb{R}^{N+l}}a(z)\omega_n^2(z)dz
\\\geq&\frac{1}{2}\|\omega_n\|_{\varepsilon_n}^2-(q-1)\int_{\widetilde{B}_{R\varepsilon_n}(z_{\varepsilon_n})}U_{\varepsilon_n,z_{\varepsilon_n}}^{q-2}(z)\omega_n^2(z)dz
\\=&\frac{1}{2}\varepsilon_n^{N_\gamma}-(q-1)\int_{\widetilde{B}_{R\varepsilon_n}(z_{\varepsilon_n})}U_{\varepsilon_n,z_{\varepsilon_n}}^{q-2}(z)\omega_n^2(z)dz,
\end{aligned}
\end{equation}
which, combine with \eqref{103}, we find that
\begin{equation}\label{106}
  \varepsilon_n^{N_\gamma}\leq C\int_{\widetilde{B}_{R\varepsilon_n}(z_{\varepsilon_n})}U_{\varepsilon_n,z_{\varepsilon_n}}^{q-2}(z)\omega_n^2(z)dz
  \leq C'\int_{\widetilde{B}_{R\varepsilon_n}(z_{\varepsilon_n})}\omega_n^2(z)dz.
\end{equation}
In order to obtain contradiction from \eqref{106}, we just need to prove
\begin{equation}\label{107}
  \int_{\widetilde{B}_{R\varepsilon_n}(z_{\varepsilon_n})}\omega_n^2(z)dz=o(\varepsilon_n^{N_\gamma}).
\end{equation}

\noindent\textbf{Step 2: Blow up analysis to prove \eqref{107}}\\
Let
\begin{equation}\label{cvb2}
  \widetilde{x}:=\frac{x}{\varepsilon_n},\quad\widetilde{y}:=\frac{y-y_{\varepsilon_n}}{\varepsilon_n^{1+\gamma}},
\end{equation}
and we define
\begin{equation}\label{108}
  b_n(\widetilde{z})=\omega_n(\varepsilon_n\widetilde{x},\varepsilon_n^{1+\gamma}\widetilde{y}+y_{\varepsilon_n}).
\end{equation}
According to Definition \ref{zrf1}, ball $\widetilde{B}_{R\varepsilon_n}(z_{\varepsilon_n})$ transforms to ball $\widetilde{B}_R(0)$.\\
By \eqref{lxp}, we can calculate that
\begin{equation}\label{109}
  \int_{\mathbb{R}^{N+l}}|\nabla_\gamma b_n(\widetilde{z})|^2+b_n^2(\widetilde{z})d\widetilde{z}=1.
\end{equation}
i.e. $b_n(\widetilde{z})$ is bounded in $H_\gamma^{1,2}(\mathbb{R}^{N+l})$.
Therefore, there exists a subsequence of $\{b_n(\widetilde{z})\}$ in $H_\gamma^{1,2}(\mathbb{R}^{N+l})$, and we still denote by $\{b_n(\widetilde{z})\}$. Also there exists a function $\alpha(\widetilde{z})\in H_\gamma^{1,2}(\mathbb{R}^{N+l})$, such that
\begin{equation}\label{110}
  b_n(\widetilde{z})\rightharpoonup\alpha(\widetilde{z}),\;\;\;\;~\mbox{weakly in}~H_\gamma^{1,2}(\mathbb{R}^{N+l}).
\end{equation}
According to the embedding $H_\gamma^{1,2}(\Omega)\hookrightarrow L^p(\Omega)$ is compact for every $p\in[1,2_\gamma^\ast)$ (see \cite{Berestycki-Lions}).
\begin{equation}\label{111}
  b_n(\widetilde{z})\rightarrow\alpha(\widetilde{z}),\;\;\;\;~\mbox{strongly in}~L_{loc}^2(\mathbb{R}^{N+l}).
\end{equation}
Then the LHS of \eqref{107} becomes
\begin{equation}\label{112}
\begin{aligned}
 &\int_{\widetilde{B}_{R\varepsilon_n}(z_{\varepsilon_n})}\omega_n^2(z)dz
 \\=&\varepsilon_n^{N_\gamma}\int_{\widetilde{B}_R(0)}b_n^2(\widetilde{z})d\widetilde{z}
 \\\overset{\eqref{111}}=&\varepsilon_n^{N_\gamma}\left(\int_{\widetilde{B}_R(0)}\alpha^2(\widetilde{z})d\widetilde{z}+o(1)\right).
\end{aligned}
\end{equation}
To prove \eqref{107}, we claim that $\alpha(\widetilde{z})=0$. For this purpose, we will find the equation that $\alpha(\widetilde{z})$ satisfies.\\

\noindent\textbf{Step 3: To obtain the equation for $\alpha(\widetilde{z})$}\\
For any $\phi\in E_{\varepsilon_n}$ ,we define
\begin{equation}\label{113}
  \psi(\widetilde{z})=\phi(\varepsilon_n\widetilde{x},\varepsilon_n^{1+\gamma}\widetilde{y}+y_{\varepsilon_n}).
\end{equation}
From \eqref{102} and \eqref{cvb2}, we obtain
\begin{equation}\label{114}
\begin{aligned}
  &\int_{\mathbb{R}^{N+l}}\nabla_\gamma b_n(\widetilde{z})\cdot\nabla_\gamma \psi(\widetilde{z})+a(\varepsilon_n\widetilde{x},\varepsilon_n^{1+\gamma}\widetilde{y}+y_{\varepsilon_n})b_n(\widetilde{z})
  \psi(\widetilde{z})
  \\&-(q-1)\overline{U}^{q-2}(\widetilde{z})b_n(\widetilde{z})\psi(\widetilde{z})d\widetilde{z}=o(1)\|\psi(\widetilde{z})\|_\gamma.
\end{aligned}
\end{equation}
where
\begin{equation}\label{116}
  \overline{U}(\widetilde{z})=\big(a(z_{\varepsilon_n})\big)^{\frac{1}{q-2}}w(\sqrt{a(z_{\varepsilon_n})}\widetilde{x},(\sqrt{a(z_{\varepsilon_n})})^{1+\gamma}\widetilde{y}).
\end{equation}
Let $n\rightarrow\infty$, for any $\phi\in E_{\varepsilon_n}$ we have that $\alpha(\widetilde{z})$ satisfies
\begin{equation}\label{117}
  -\Delta_\gamma \alpha(\widetilde{z})+a(z_0)\alpha(\widetilde{z})-(q-1)\overline{U}_0^{q-2}(\widetilde{z})
  \alpha(\widetilde{z})=0,
\end{equation}
where
\begin{equation}\label{118}
  \overline{U}_0(\widetilde{z})=\big(a(z_0)\big)^{\frac{1}{q-2}}w(\sqrt{a(z_0)}\widetilde{x},(\sqrt{a(z_0)})^{1+\gamma}\widetilde{y}).
\end{equation}

Next for any $\phi\in H_\gamma^{1,2}(\mathbb{R}^{N+l})$, we prove $\alpha(\widetilde{z})$ satisfies \eqref{117}. We take
\begin{equation}\label{119}
  Q_{\varepsilon_n}\phi=\phi-\sum^l_{j=1}d_j\frac{\partial U_{\varepsilon_n,z_{\varepsilon_n}}(z)}{\partial y_j}\in E_{\varepsilon_n}.
\end{equation}
Then $d=(d_1, d_2, \ldots d_l)$ satisfies
\begin{equation}\label{120}
  \sum^l_{j=1}\langle\frac{\partial U_{\varepsilon_n,z_{\varepsilon_n}}(z)}{\partial y_j},\frac{\partial U_{\varepsilon_n,z_{\varepsilon_n}}(z)}{\partial y_k}\rangle_{\varepsilon_n}d_j
  =\langle\phi,\frac{\partial U_{\varepsilon_n,z_{\varepsilon_n}}(z)}{\partial y_k}\rangle_{\varepsilon_n},\;\;k=1,\cdots,l.
\end{equation}
According to Proposition \ref{lem00}, \eqref{120} is solvable.
By putting $Q_{\varepsilon_n}\phi$ into \eqref{102}, we have
\begin{equation}\label{121}
\begin{aligned}
&\langle L_{\varepsilon_n}\omega_n, Q_{\varepsilon_n}\phi\rangle_{\varepsilon_n}
\\=&\int_{\mathbb{R}^{N+l}}\varepsilon_n^2\nabla_\gamma\omega_n\cdot\nabla_\gamma\big(\phi-\sum^l_{j=1}d_j\frac{\partial U_{\varepsilon_n,z_{\varepsilon_n}}(z)}{\partial y_j}\big)+a(z)\omega_n(z)\big(\phi-\sum^l_{j=1}d_j\frac{\partial U_{\varepsilon_n,z_{\varepsilon_n}}(z)}{\partial y_j}\big)
\\&-(q-1)U_{\varepsilon_n,z_{\varepsilon_n}}^{q-2}(z)\omega_n(z)\big(\phi-\sum^l_{j=1}d_j\frac{\partial U_{\varepsilon_n,z_{\varepsilon_n}}(z)}{\partial y_j}\big)dz
\\=&\langle L_{\varepsilon_n}\omega_n, \phi\rangle_{\varepsilon_n}-\sum^l_{j=1}d_j r_{n,j},
\end{aligned}
\end{equation}
where
\begin{equation}\label{122}
  r_{n,j}=\langle L_{\varepsilon_n}\omega_n, \frac{\partial U_{\varepsilon_n,z_{\varepsilon_n}}(z)}{\partial y_j}\rangle_{\varepsilon_n}.
\end{equation}
On the other hand,
\begin{equation}\label{123}
\begin{aligned}
  \langle L_{\varepsilon_n}\omega_n, Q_{\varepsilon_n}\phi\rangle_{\varepsilon_n}=&\langle Q_{\varepsilon_n}L_{\varepsilon_n}\omega_n, Q_{\varepsilon_n}\phi\rangle_{\varepsilon_n}
  \\\overset{\eqref{101}}=&o(1)\|\omega_n\|_{\varepsilon_n}\|Q_{\varepsilon_n}\phi\|_{\varepsilon_n}
  \\=&o(\varepsilon_n^\frac{N_\gamma}{2})\|\phi\|_{\varepsilon_n}.
\end{aligned}
\end{equation}
Combining \eqref{121} and \eqref{123}, we obtain
\begin{equation}\label{124}
\begin{aligned}
&\int_{\mathbb{R}^{N+l}}\varepsilon_n^2\nabla_\gamma\omega_n\cdot \nabla_\gamma\phi+a(z)\omega_n(z)\phi(z)-(q-1)U_{\varepsilon_n,z_{\varepsilon_n}}^{q-2}(z)\omega_n(z)\phi(z)dz
\\=&o(\varepsilon_n^\frac{N_\gamma}{2})\|\phi\|_{\varepsilon_n}+\sum^l_{j=1}d_j r_{n,j}.
\end{aligned}
\end{equation}
Now we estimate $r_{n,j}$ by applying blow up analysis, and we claim that
\begin{equation}\label{lxp2}
  r_{n,j}\rightarrow0,\;\;~\mbox{as}~n\rightarrow0.
\end{equation}
In fact, this process of blow up is as same as before. We rewrite \eqref{120} as
\begin{equation}\label{125}
  d_j=\sum^l_{k=1}\alpha_{\varepsilon_n,j,k}\langle\phi,\frac{\partial U_{\varepsilon_n,z_{\varepsilon_n}}(z)}{\partial y_k}\rangle_{\varepsilon_n},
\end{equation}
where $\alpha_{\varepsilon_n,j,k}$ are some constants  only depend on basis of $K_\varepsilon$.
To calculate $r_{n,j}$, we take $\phi=\frac{\partial U_{\varepsilon_n,z_{\varepsilon_n}}(z)}{\partial y_j}$.
By \eqref{213}, \eqref{124} and $\omega_n\in E_{\varepsilon_n}$ , we have
\begin{equation}\label{126}
\begin{aligned}
&\sum^l_{j=1}d_j r_{n,j}
\\\overset{\eqref{124}}=&\langle L_{\varepsilon_n}\omega_n,\frac{\partial U_{\varepsilon_n,z_{\varepsilon_n}}(z)}{\partial y_j}    \rangle_{\varepsilon_n}
+o(\varepsilon_n^\frac{N_\gamma}{2})\|\frac{\partial U_{\varepsilon_n,z_{\varepsilon_n}}(z)}{\partial y_j}\|_{\varepsilon_n}
\\=&\langle \omega_n,\frac{\partial U_{\varepsilon_n,z_{\varepsilon_n}}(z)}{\partial y_j}    \rangle_{\varepsilon_n}
-(q-1)\int_{\mathbb{R}^{N+l}}U_{\varepsilon_n,z_{\varepsilon_n}}^{q-2}(z)\omega_n(z)\frac{\partial U_{\varepsilon_n,z_{\varepsilon_n}}(z)}{\partial y_j}dz
\\&+o(\varepsilon_n^{\frac{N}{2}+(1+\gamma)(l-1)})
\\=&-(q-1)\int_{\mathbb{R}^{N+l}}U_{\varepsilon_n,z_{\varepsilon_n}}^{q-2}(z)\omega_n(z)\frac{\partial U_{\varepsilon_n,z_{\varepsilon_n}}(z)}{\partial y_j}dz
+o(\varepsilon_n^{\frac{N}{2}+(1+\gamma)(l-1)})
\end{aligned}
\end{equation}
Next, we estimate the following term in \eqref{126}:
\begin{equation}\label{127}
  -(q-1)\int_{\mathbb{R}^{N+l}}U_{\varepsilon_n,z_{\varepsilon_n}}^{q-2}(z)\omega_n(z)\frac{\partial U_{\varepsilon_n,z_{\varepsilon_n}}(z)}{\partial y_j}dz.
\end{equation}
Following \eqref{207}, for $\omega_n\in E_{\varepsilon_n}$, we have
\begin{equation}\label{128}
\begin{aligned}
  &\int_{\mathbb{R}^{N+l}}\varepsilon_n^2\nabla_\gamma\frac{\partial U_{\varepsilon_n,z_{\varepsilon_n}}(z)}{\partial y_j}\cdot\nabla_\gamma\omega_n(z)+a(z_{\varepsilon_n})\frac{\partial U_{\varepsilon_n,z_{\varepsilon_n}}(z)}{\partial y_j}\omega_n(z) dz
  \\=&(q-1)\int_{\mathbb{R}^{N+l}}U_{\varepsilon_n,z_{\varepsilon_n}}^{q-2}(z)\frac{\partial U_{\varepsilon_n,z_{\varepsilon_n}}(z)}{\partial y_j}\omega_n(z) dz.
\end{aligned}
\end{equation}
We rewrite \eqref{128} and obtain
\begin{equation}\label{129}
\begin{aligned}
  &\int_{\mathbb{R}^{N+l}}\varepsilon_n^2\nabla_\gamma\frac{\partial U_{\varepsilon_n,z_{\varepsilon_n}}(z)}{\partial y_j}\cdot\nabla_\gamma\omega_n(z)+a(z)\frac{\partial U_{\varepsilon_n,z_{\varepsilon_n}}(z)}{\partial y_j}\omega_n(z) dz
  \\=&\int_{\mathbb{R}^{N+l}}\big(a(z)-a(z_{\varepsilon_n})\big)\frac{\partial U_{\varepsilon_n,z_{\varepsilon_n}}(z)}{\partial y_j}\omega_n(z) dz
  \\+&(q-1)\int_{\mathbb{R}^{N+l}}U_{\varepsilon_n,z_{\varepsilon_n}}^{q-2}(z)\frac{\partial U_{\varepsilon_n,z_{\varepsilon_n}}(z)}{\partial y_j}\omega_n(z) dz.
\end{aligned}
\end{equation}
Notice that
\begin{equation}\label{130}
  \langle\frac{\partial U_{\varepsilon_n,z_{\varepsilon_n}}(z)}{\partial y_j},\omega_n(z)\rangle_{\varepsilon_n}=0.
\end{equation}
According to \eqref{cvb2}, we have
\begin{equation}\label{cvb3}
  z-z_{\varepsilon_n}=(\varepsilon_n\widetilde{x}, \varepsilon_n^{1+\gamma}\widetilde{y}),
\end{equation}
\begin{equation}\label{cvb4}
  d(z,z_{\varepsilon_n})=\varepsilon_nd(\widetilde{z},0), \;\;~\mbox{where}~\widetilde{z}=(\widetilde{x},\widetilde{y}),
\end{equation}
and
\begin{equation}\label{cvb5}
  \frac{\partial U_{\varepsilon_n,z_{\varepsilon_n}}(z)}{\partial y_j}=\frac{\partial U_{\varepsilon_n,z_{\varepsilon_n}}}{\partial \widetilde{y_j}}{\varepsilon_n}^{-(1+\gamma)}.
\end{equation}
Thus by \eqref{129}-\eqref{cvb5} we obtain
\begin{equation}\label{131}
\begin{aligned}
  &-(q-1)\int_{\mathbb{R}^{N+l}}U_{\varepsilon_n,z_{\varepsilon_n}}^{q-2}(z)\omega_n(z)\frac{\partial U_{\varepsilon_n,z_{\varepsilon_n}}(z)}{\partial y_j}dz.
  \\=&\int_{\mathbb{R}^{N+l}}\big(a(z)-a(z_{\varepsilon_n})\big)\frac{\partial U_{\varepsilon_n,z_{\varepsilon_n}}(z)}{\partial y_j}\omega_n(z)dz
  \\=&\int_{\mathbb{R}^{N+l}}\left(\nabla a(z_{\varepsilon_n})\cdot(z-z_{\varepsilon_n})+O(d^2(z,z_{\varepsilon_n}))\right)\frac{\partial U_{\varepsilon_n,z_{\varepsilon_n}}(z)}{\partial y_j}\omega_n(z)dz
  \\=&O(\varepsilon_n^{N_\gamma-\gamma}).
\end{aligned}
\end{equation}
Since for $N>1$,
\begin{equation}\label{cvbn}
  N_\gamma-\gamma>\frac{N}{2}+(1+\gamma)(l-1),
\end{equation}
by \eqref{125}, \eqref{126} and \eqref{131} we get
\begin{equation}\label{132}
\sum^l_{j=1}\sum^l_{k=1}\alpha_{\varepsilon_n,j,k}\langle\frac{\partial U_{\varepsilon_n,z_{\varepsilon_n}}(z)}{\partial y_j},\frac{\partial U_{\varepsilon_n,z_{\varepsilon_n}}(z)}{\partial y_k}\rangle_{\varepsilon_n}r_{n,j}=o(\varepsilon_n^{\frac{N}{2}+(1+\gamma)(l-1)}).
\end{equation}
Combining \eqref{213} with \eqref{132}, we get
\begin{equation}\label{133}
  r_{n,j}=o(\varepsilon^{\frac{N}{2}+1+\gamma})
\end{equation}
Thus we finish the proof of claim.

Therefore, for any $\phi\in H^{1,2}_\gamma(\mathbb{R}^{N+l})$, by \eqref{213} and \eqref{125} we have
\begin{equation}\label{cvbn1}
  \sum^l_{j=1}d_j r_{n,j}=\sum^l_{j=1}\sum^l_{k=1}\alpha_{\varepsilon_n,j,k}\langle\phi,\frac{\partial U_{\varepsilon_n,z_{\varepsilon_n}}(z)}{\partial y_k}\rangle_{\varepsilon_n}r_{n,j}=o(\varepsilon_n^\frac{N_\gamma}{2})\|\phi\|_{\varepsilon_n}.
\end{equation}
Hence for any $\phi\in H^{1,2}_\gamma(\mathbb{R}^{N+l})$, \eqref{124} gives that
\begin{equation}\label{134}
\begin{aligned}
&\int_{\mathbb{R}^{N+l}}\varepsilon_n^2\nabla_\gamma\omega_n \cdot\nabla_\gamma\phi+a(z)\omega_n(z)\phi(z)-(q-1)U_{\varepsilon_n,z_{\varepsilon_n}}^{q-2}(z)\omega_n(z)\phi(z)dz
\\=&o(\varepsilon_n^\frac{N_\gamma}{2})\|\phi\|_{\varepsilon_n}.
\end{aligned}
\end{equation}
According to the definitions \eqref{108} and \eqref{113}, applying blow up analysis, \eqref{134} transforms into \eqref{114}.
Let $n\rightarrow\infty$, therefore $\alpha(\widetilde{z})$ satisfies \eqref{117}.\\

\noindent\textbf{Step 4: To prove $\alpha(\widetilde{z})=0$ }\\

The non-degeneracy of the solution of \eqref{117} tells us
\begin{equation}\label{136}
  \alpha(\widetilde{z})=\sum^l_{j=1}c_j\frac{\partial\overline{U}_0(\widetilde{z})}{\partial\widetilde{y_j}}.
\end{equation}
Then for $\omega_n\in E_{\varepsilon_n}$, we can easily find that
\begin{equation}\label{137}
\begin{aligned}
  0=&\langle\omega_n(z),\frac{\partial U_{\varepsilon_n,z_{\varepsilon_n}}(z)}{\partial y_j}\rangle_{\varepsilon_n}
  \\\overset{\eqref{209}}=&\int_{\mathbb{R}^{N+l}}\big(a(z)-a(z_{\varepsilon_n})\big)\frac{\partial U_{\varepsilon_n,z_{\varepsilon_n}}(z)}{\partial y_j}\omega_n(z) dz
  \\+&(q-1)\int_{\mathbb{R}^{N+l}}U_{\varepsilon_n,z_{\varepsilon_n}}^{q-2}(z)\frac{\partial U_{\varepsilon_n,z_{\varepsilon_n}}(z)}{\partial y_j}\omega_n(z) dz,
\end{aligned}
\end{equation}
in which
\begin{equation}\label{138}
  \int_{\mathbb{R}^{N+l}}\big(a(z)-a(z_{\varepsilon_n})\big)\frac{\partial U_{\varepsilon_n,z_{\varepsilon_n}}(z)}{\partial y_j}\omega_n(z) dz
 \overset{\eqref{131}}=O(\varepsilon_n^{N_\gamma-\gamma}).
\end{equation}
And by \eqref{108} and \eqref{116}, we have
\begin{equation}\label{139}
\begin{aligned}
  &(q-1)\int_{\mathbb{R}^{N+l}}U_{\varepsilon_n,z_{\varepsilon_n}}^{q-2}(z)\frac{\partial U_{\varepsilon_n,z_{\varepsilon_n}}(z)}{\partial y_j}\omega_n(z) dz
  \\=&(q-1)\int_{\mathbb{R}^{N+l}}\overline{U}^{q-2}(\widetilde{z})\frac{\partial\overline{U}(\widetilde{z})}{\partial\widetilde{y_j}}b_n(\widetilde{z})
  \varepsilon_n^{N\gamma-\gamma-1}d\widetilde{z}
\end{aligned}
\end{equation}
Combining with \eqref{137}, \eqref{138}, and \eqref{139}, we obtain
\begin{equation}\label{140}
  \int_{\mathbb{R}^{N+l}}\overline{U}^{q-2}(\widetilde{z})\frac{\partial\overline{U}(\widetilde{z})}{\partial\widetilde{y_j}}b_n(\widetilde{z})
  d\widetilde{z}=O(\varepsilon_n).
\end{equation}
Let $n\rightarrow\infty$,
\begin{equation}\label{141}
    \int_{\mathbb{R}^{N+l}}\overline{U}_0^{q-2}(\widetilde{z})\frac{\partial\overline{U}_0(\widetilde{z})}{\partial\widetilde{y_j}}\alpha(\widetilde{z})
  d\widetilde{z}=0.
\end{equation}
Put \eqref{136} into \eqref{141}, we find that $c_j=0$ in \eqref{136}, $j=1,\cdots,l$. Thus it holds $\alpha(\widetilde{z})=0$.\\
From \noindent\textbf{Step 1} to \noindent\textbf{Step 4}, we obtain \eqref{107} is not valid, which implies that
\eqref{100} holds true.\\

\noindent\textbf{Step 5: To prove $Q_\varepsilon L_\varepsilon$ is a bijective mapping in $E_\varepsilon$}

From \eqref{100} we know $Q_\varepsilon L_\varepsilon$ is a injection in $E_\varepsilon$.

We claim that $Q_\varepsilon L_\varepsilon E_\varepsilon$ is a closed set. In fact, suppose that $v_i=Q_\varepsilon L_\varepsilon u_i\in Q_\varepsilon L_\varepsilon E_\varepsilon$ and $v_i\rightarrow v$, where $u_i\in E_\varepsilon$.
Then
\begin{equation}\label{lxps}
  \|u_i-u_j\|_\varepsilon\leq\frac{1}{\rho}\|v_i-v_j\|_\varepsilon\rightarrow0\;\;~\mbox{as}~i,j\rightarrow\infty.
\end{equation}
Therefore,
\begin{equation}\label{lxpb}
  u_i\rightarrow u\;\;~\mbox{and}~v=Q_\varepsilon L_\varepsilon u\in Q_\varepsilon L_\varepsilon E_\varepsilon.
\end{equation}
Hence, $Q_\varepsilon L_\varepsilon E_\varepsilon$ is a closed set.

To prove $Q_\varepsilon L_\varepsilon$ is a surjection in $E_\varepsilon$, i.e. $Q_\varepsilon L_\varepsilon E_\varepsilon = E_\varepsilon$. We apply proof by contradiction. Suppose that
$Q_\varepsilon L_\varepsilon E_\varepsilon \neq E_\varepsilon$. Then there exists a $v\neq0$ and $v\in E_\varepsilon$. Since $Q_\varepsilon L_\varepsilon E_\varepsilon$ is a closed set, we have the decomposition $v=v_0+w$, where $v_0\in Q_\varepsilon L_\varepsilon E_\varepsilon$ and $w\neq 0$, $w\in(Q_\varepsilon L_\varepsilon E_\varepsilon)^\bot$. Thus for any $u\in E_\varepsilon$,
\begin{equation}\label{142}
  0=\langle Q_\varepsilon L_\varepsilon u,w\rangle_\varepsilon=\langle u,Q_\varepsilon L_\varepsilon w\rangle_\varepsilon,
\end{equation}
which indicates $Q_\varepsilon L_\varepsilon w=0$. Since $Q_\varepsilon L_\varepsilon$ is a injection in $E_\varepsilon$, we get $w=0$, which contradicts that $w\neq 0$. Therefore, $Q_\varepsilon L_\varepsilon$ is a surjection in $E_\varepsilon$.

Since $Q_\varepsilon L_\varepsilon$ is both a injection and a surjection in $E_\varepsilon$, then $Q_\varepsilon L_\varepsilon$ is a bijective mapping in $E_\varepsilon$. This completes the proof.
\begin{flushright}
  $\square$
\end{flushright}


\end{document}